\newcommand{\V}{\mathbf{V}_{\Sigma}}
\newcommand{\C}{\mathbb{C}}
\newcommand{\Z}{\mathbb{Z}}
\newcommand{\R}{\mathbb{R}}
\newcommand{\Hy}{\mathbb{H}}
\newcommand{\Diff}{\mathrm{Diff}}
\newcommand{\SL}{\mathrm{SL}(2,\R)}
\newcommand{\MCG}{\mathrm{MCG}(\Sigma)}
\newcommand{\norm}[1]{\left| #1 \right|}
\newcommand{\resp}{\textit{(resp. }}
\newcommand{\disco}{\Sigma}
\newcommand{\discoveech}{\mathbf{V}_{\disco}}
\newcommand{\veechsg}{\Gamma}
\newcommand{\Sone}{S^1}
\newtheorem{theorem}[equation]{Theorem}
\newtheorem*{theorem*}{Theorem}
\newtheorem{corollary}[equation]{Corollary}
\newtheorem{lemma}[equation]{Lemma}
\newtheorem*{property*}{Property}
\newtheorem{proposition}[equation]{Proposition}
\newtheorem{remark}[equation]{Remark}
\theoremstyle{definition}
\newtheorem{definition}[equation]{Definition}
\newcommand{\fonctionbis}[4]{\begin{array}{r c l}
					   #1 & \to & #2 \\
					 #3 & \mapsto & #4 \\
			   \end{array}}
\newcommand{\quotient}[2]{{\raisebox{.2em}{$#1$}\left/\raisebox{-.2em}{$#2$}\right.}}
\newcommand{\T}{\mathcal{T}(\alpha)}
\title[Cascades in the dynamics of AIETs]{Cascades in the dynamics of affine interval exchange transformations}
\author{Adrien Boulanger}
\author{Charles Fougeron}
\author{Selim Ghazouani}
\begin{document}

\maketitle

\begin{abstract}

  We describe in this article the dynamics of a $1$-parameter family
  of affine interval exchange transformations. This amounts to studying
  the directional foliations of a particular dilatation surface introduced
  in \cite{DFG}, the \textit{Disco surface}.  We show that this family
  displays various dynamical behaviours: it is generically
  \textit{dynamically trivial} but for a Cantor set of parameters the
  leaves of the foliations accumulate to a (transversely) Cantor
  set. This study is achieved through the analysis of the dynamics of the Veech group of this surface combined a modified version of Rauzy induction in the context of affine interval exchange transformations.

\end{abstract}

\section{Introduction.}

An \textit{affine interval exchange transformation} (or AIET) is a piecewise continuous bijection of the interval $[0,1]$ which is affine restricted to its intervals of continuity. It has been known since the work of Levitt (\cite{Levitt}) that AIETs can display as complicated a topological behaviour as dimension one allows: it can either be asymptotically periodic, minimal or (and this is  the surprising part) have an invariant quasi-minimal Cantor set. In the latter case, the AIET would still be semi-conjugated to a minimal \textit{linear} interval exchange transformation. In the spirit of generalising the theory of circle diffeomorphisms to piecewise continuous bijection of the interval, Camelier-Guttierez (\cite{CamelierGutierrez}) begun a study of the regularity of the conjugacy between affine and linear IET, pursued by Cobo (\cite{Cobo}), Bressaud-Hubert-Maas(\cite{BressaudHubertMaass}) and concluded by Marmi-Moussa-Yoccoz(\cite{MarmiMoussaYoccoz}) who proved that almost every linear IET can be semi-conjugated to an AIET with an invariant Cantor set, in sharp contrast with Denjoy theorem in the case of sufficiently regular diffeomorphisms of the circle.

\vspace{2mm} \noindent The goal of this article is to initiate a systematic study of the generic dynamical behaviour in parameter families of AIETs. The standard result in the theory of circle diffeomorphisms is a theorem by Herman (see \cite{Herman}) predicting that for any (sufficiently regular) one-parameter family of circle diffeomorphisms, the set of minimal parameters has non-zero Lebesgue measure. On the other hand, it was known since the seminal work of Peixoto (see \cite{Peixoto}, \cite{Peixoto1}) that asymptotically periodic behaviour is \textit{topologically generic} for flows on closed surfaces \footnote{Generalised interval exchange transformations, of which AIETs are particular cases, should be thought of as first return maps of flows on higher genus surfaces}, and a refinement of this theorem was proved by Liousse (\cite{Liousse}) for transversally affine foliations in the case of higher genus surfaces. We present in this article a one parameter family of AIETs whose generic behaviour (in the measure theoretic sense) contrasts with the case of circle diffeomorphisms and Herman's theorem.

 \vspace{4mm}

\noindent We consider the map $F : D \longrightarrow D$, where
$D = [0,1[$, defined the following way: \vspace{-.2cm}
$$
\def\arraystretch{1.8}
\begin{array}{ccrl}
  \text{if} \ x \in \left[ 0,\frac{1}{6}\right[             & \text{then} & F(x) = & 2x + \frac{1}{6} \\
  \text{if} \ x \in \left[ \frac{1}{6},\frac{1}{2}\right[   & \text{then} & F(x) = & \frac{1}{2}(x-\frac{1}{6}) \\
  \text{if} \ x \in  \left[\frac{1}{2},\frac{5}{6}\right[   & \text{then} & F(x) = & \frac{1}{2}(x-\frac{1}{2}) + \frac{5}{6} \\
  \text{if} \ x \in  \left[\frac{5}{6},1\right[             & \text{then} & F(x) = & 2(x-\frac 5 6) + \frac{1}{2}
\end{array}
$$

\begin{figure}[h]
  \centering
  \includegraphics[width=.36\linewidth]{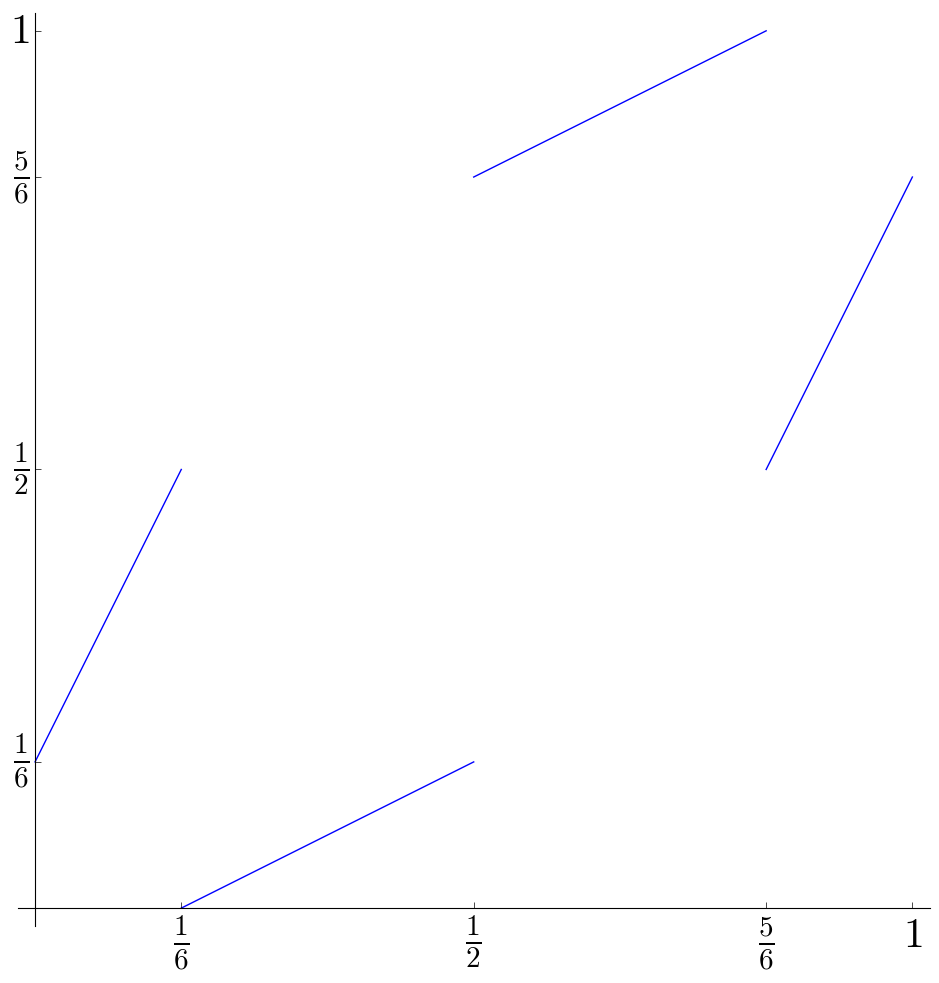}
  \caption{The graph of $F$.}
\end{figure}

\vspace{2mm}

\noindent The map $F$ is an \textit{affine interval exchange
  transformation} (AIET) and one easily verifies that for all
$x \in D$, $F^2(x) = x$. Its dynamical behaviour is therefore as
simple as can be. Composing given maps by a family of linear rotations is a simple way to produce families of maps of the interval. Thus we consider the family $(F_t)_{t \in \Sone}$,
parametrised by $\Sone = \quotient{\R}{\Z}$ defined by
$$ F_t = F \circ r_t $$ where $r_t : [0,1[ \longrightarrow [0,1[$
is the translation by $t$ modulo $1$.  \\

\noindent  The following definition is of crucial importance for what follows. It was introduced by Liousse in \cite{Liousse} who proved that this dynamical behaviour is topologically generic for transversally affine foliations on surfaces.

\noindent Recall that the orbit of a point $x$ under a map $f$ is the set $\mathcal{O}(x) = \{ f^n(x) \ | \ n \in \mathbb{N} \}$ and its $\omega$-limit is the set of accumulation points of the sequence $(f^n(x))_{n \in \mathbb{N}}$.

\begin{definition} We say that $F_t$ is dynamically trivial if there exists two periodic
points $x^+, x^- \in D$ of orders $p,q \in \mathbb{N}$ such that

\begin{itemize}

\item $(F_t^p)'(x^+) < 1$
\item $(F_t^q)'(x^-) > 1$
\item for all $z \in D$ which is not in the orbit of $x^-$, the
  $\omega$-limit of $z$ is equal to $\mathcal{O}(x^+) $ the orbit of
  $x^+$.

\end{itemize}
\end{definition}

\noindent It means that the map $F_t$ has two periodic orbits, one of which attracting all the other orbits but the other periodic orbit which is repulsive. The following picture is the product of a numerical experiment representing periodic orbits in the family $(F_t)$ and their bifurcations. \vspace{2mm}

\begin{figure}[!h]
  \centering
  \includegraphics[scale=0.4]{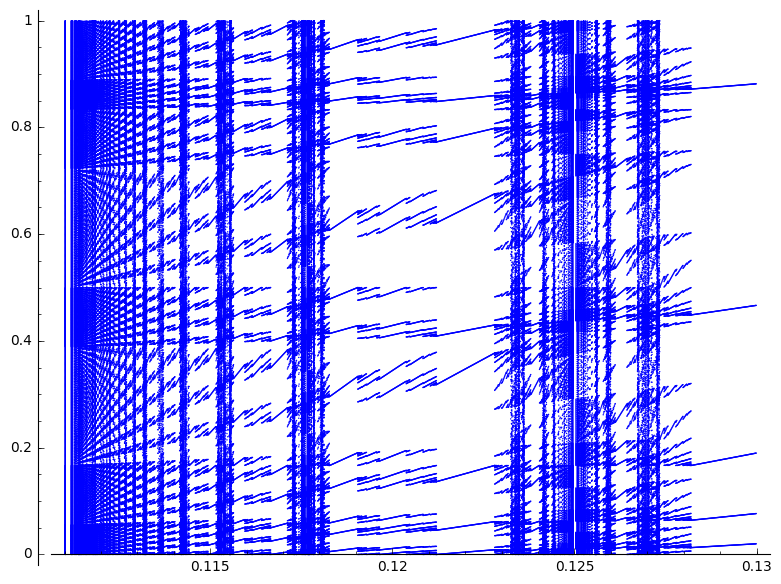}
  \caption{The $\omega$-limit of a random point for $F_t$,
    for $ 0.11 \leq t \leq 0.13$. Parameters with periodic orbits are open and dense, and can accumulate to seemingly minimal parameters.}
  \label{disco2}
\end{figure}

\noindent This article aims at highlighting that this one-parameter family of
AIETs displays rich and various dynamical behaviours. The analysis
developed in it, using tools borrowed from the theory of geometric
structures on surfaces, leads to the following theorems.

\begin{theorem}
  \label{thm1}
  For Lebesgue-almost all $t \in \Sone$, $F_t$ is dynamically trivial.
\end{theorem}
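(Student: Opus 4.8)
The plan is to translate the statement into the language of directional foliations on the Disco surface $\disco$ and to exploit the action of its Veech group $\veech$ on the circle of directions. Each parameter $t$ determines a direction $\theta(t) \in \RPone$, and $F_t$ is, up to the obvious identifications, the first-return map of the directional foliation $\mathcal{F}_{\theta(t)}$ to a fixed transversal; hence the dynamical type of $F_t$ depends only on $\theta(t)$. The first thing I would record is that dynamical triviality is a \emph{projective} invariant: if $\varphi \in \veech$ is an affine automorphism with linear part $A$, then $\varphi$ carries leaves of $\mathcal{F}_\theta$ to leaves of $\mathcal{F}_{A\cdot\theta}$ diffeomorphically, so $F_\theta$ is dynamically trivial if and only if $F_{A\cdot\theta}$ is. Thus the set $T \subset \Sone$ of trivial parameters is invariant under the projective action of $\veech$ on $\RPone$, and it suffices to understand this action together with one explicit family of trivial directions.

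Next I would produce a nonempty open set of trivial directions by hand. A direction in which $\disco$ decomposes into cylinders whose holonomy is a nontrivial dilation has the feature that each such cylinder carries a single closed core leaf, and the return map across the cylinder is affine with slope $\lambda \neq 1$; its unique fixed point is the core, which is attracting. Every nearby leaf therefore spirals onto it, so $F_\theta$ has finitely many attracting periodic orbits capturing almost every point, i.e. it is dynamically trivial. Since these periodic orbits are hyperbolic, they persist under small perturbations of $\theta$, giving an open interval $U \subset T$. Saturating $U$ under $\veech$ yields a $\veech$-invariant open subset $\veech \cdot U \subseteq T$ whose complement is contained in the limit set $\Lambda \subset \RPone$ of the action.

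The heart of the argument is to show that $\Lambda$ has zero Lebesgue measure, and this is where I would bring in Rauzy--Veech induction. I would encode the induction as a renormalization map $R$ acting on the parameter $t$ together with finite combinatorial data, designed so that the trivializing event of the previous paragraph --- the appearance of a subinterval mapped strictly inside itself by an affine contraction --- corresponds to the $R$-orbit of $t$ entering a distinguished region $W$. Dynamical triviality of $F_t$ is then equivalent to the orbit $(R^n t)_n$ eventually landing in $W$, so the bad set is exactly $\Lambda = \{t : R^n t \notin W \text{ for all } n\}$. To see that $\Lambda$ is a measure-zero Cantor set I would establish a uniform distortion and expansion estimate for $R$ on the survivor set and show that a definite proportion of parameters is captured by $W$ at each stage; a Borel--Cantelli or bounded-distortion argument in the style of the Gauss map then gives $|\Lambda| = 0$, while the self-similar combinatorial structure of the survivors exhibits $\Lambda$ as a Cantor set.

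The main obstacle I anticipate is precisely this quantitative control of the renormalization: unlike the classical translation case, the induction here acts on genuine AIETs with both expanding and contracting pieces, so the induced cocycle is not unimodular and the standard bounded-distortion machinery does not apply verbatim. Securing a uniform estimate that guarantees almost-sure capture by $W$ --- equivalently, that the non-wandering directions form a thin, measure-zero limit set rather than a positive-measure one --- is the crux on which Theorem~\ref{thm1} rests.
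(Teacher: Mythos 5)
Your overall strategy --- transfer to directional foliations on $\disco$, use the Veech group to propagate an explicitly understood set of directions, and control the leftover set by a renormalization scheme --- is indeed the paper's strategy. But there is a genuine gap at the pivotal step where you claim that saturating the open set $U$ of affine-cylinder directions under the Veech group leaves a complement contained in the limit set $\Lambda$. This is false, and it conflates two different bad sets. In slope coordinates the obvious affine-cylinder directions form the interval $(1,2)$, whereas a fundamental domain for the action of $\veechsg$ on its discontinuity set $\Omega_{\veechsg}$ is the whole interval $[1,4]$ (Proposition \ref{proposition discontinuity set}). Hence $\veechsg\cdot U$ misses the entire $\veechsg$-orbit of $[2,4]$, a subset of $\Omega_{\veechsg}$ of \emph{positive} measure, and nothing in your argument addresses those directions. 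Moreover, the measure-zero statement for the actual limit set $\Lambda_{\veechsg}$ is not where Rauzy--Veech induction enters at all: it comes for free from Ahlfors' theorem (a finitely generated Fuchsian group whose limit set is not the whole circle has limit set of zero Lebesgue measure). The real work, and the place where the induction is needed, is inside the fundamental interval $[2,4]$: there the first return map is a contracting $2$-interval AIET, and one must show that the set of parameters for which the induction never produces an interval mapped into itself has zero measure. The paper does this by an explicit computation showing that at each induction step a proportion at least $1/6$ of every surviving parameter interval is captured --- precisely the uniform capture estimate you correctly identify as the crux, but applied to the correct set. Note also that on the residual Cantor set in $[2,4]$ the foliations accumulate on a transverse Cantor set, a phenomenon distinct from the (conjectural) minimality on $\Lambda_{\veechsg}$; your proposal collapses these two regimes into one.

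A secondary gap: ``finitely many attracting periodic orbits capturing almost every point'' is not the notion of dynamical triviality used here; one needs every regular orbit off the single repelling leaf to converge to the single attracting one. Excluding the coexistence of an attracting closed leaf with further attractors or with an exceptional quasiminimal set requires an argument --- the paper uses the bound of $3g-3$ on the number of cylinders in a given direction together with Levitt's result that a quasiminimal set forces genus, which is incompatible with $\disco$ having genus $2$. Without this, your open set $U$ is not yet known to consist of dynamically trivial directions.
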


Our theorem is somewhat a strengthening of
Liousse's theorem for this $1$-parameter family of AIETs and a counterexample to Herman's in higher genus. Indeed, we
prove that this genericity is also of measure theoretical nature. It is also worth pointing out that a lot of parameters in this family correspond to attracting
\textit{exceptional minimal sets} (\textit{i.e.} which are homeomorphic to a Cantor set).

\begin{theorem}
  \label{thm2}
  For all $t$ in a Cantor set of parameters in $\Sone$ there
  exists a Cantor set $\mathcal{C}_t \subset D$ such that for all
  $x \in D$, the $F_t$ $\omega$-limit of $x$ is equal to
  $\mathcal{C}_t $.
\end{theorem}

\noindent The remaining parameters form a Cantor set denoted by
$\Lambda_{\Gamma} \subset \Sone$. This notation is borrowed from
Fuchsian group theory as we will indeed see that this Cantor set
is the limit set of a subgroup $\Gamma < \mathrm{PSL}(2,\R)$.
For parameters in $\Lambda_{\Gamma} $, we have,

\begin{theorem}
\label{thm3}
Let $\mathcal{H}$ be the set of points in $\Lambda_{\Gamma}$ which are not fixed by a parabolic element of $\Gamma$. Then

\begin{itemize}
  \item for $\theta \in \mathcal{H}$, the foliation is not dynamically trivial;
  \item for $\theta \in \Lambda_{\Gamma} \setminus \mathcal{H}$  the foliation is totally periodic.
\end{itemize}
\end{theorem}

The foliations corresponding to directions in $\mathcal{H}$ are also not totally periodic.
Extensive computer experiments give evidences that these foliations are minimal.

\subsection*{Outline of the paper.} Sections \ref{secaffsur} and \ref{veechgroup} are devoted to recalling geometric basics about dilatation surfaces and to the study of the hidden symmetries of the family $(F_t)$ using this geometric perspective. Section \ref{secgendir} is mostly independent of the rest of the article. Therein we explain how to generalise the renormalisation procedure known as Rauzy-Veech induction to the context of piecewise contracting maps of the interval. This analysis allows the understanding of the dynamical behaviour of $F_t$ for sufficiently many parameters so that we can rely on the aforementioned symmetries to reach almost every parameter, which we explain in Section \ref{secglopic}.

	\subsubsection*{The discosurface} The first step of the proof consists in associating to the family $(F_t)_{t \in \Sone}$ a dilatation surface which we denote by $\disco$  obtained by the gluing represented in Picture \ref{disco2}.

 \begin{figure}[!h]
   \centering
   \includegraphics[scale=0.7]{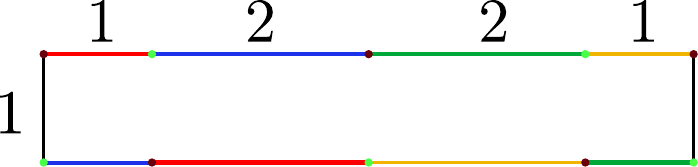}
   \caption{The surface $\disco$.}
   \label{disco2}
 \end{figure}

\noindent As a dilatation surface $\disco$ is naturally endowed with a family of foliations which we call directional foliations. For definitions of both dilatation surfaces and these foliations we refer to Section \ref{subsec dilatation }. Our family of AIETs $(F_t)_{t\in[0,1[}$ and these foliations are linked by the fact that the directional foliation in
 direction $\theta$ admits $F_t$ as their first return map on a
 cross-section, for $t = \frac{6}{\tan \theta}$. In particular they
 share the same dynamical properties hence the study of the family
 $F_t$ reduces to the study of the directional foliations of $\disco$.

 \subsubsection*{The Veech group of $\disco$.}

 The major outcome of this change of point of view is the appearance of
 hidden symmetries. Indeed, the surface $\disco$ has non-trivial group of
 affine symmetries, \textit{i.e.} a non-trivial group of diffeomorphisms given in charts as an element of the affine group $\mathrm{GL}(2, \R) \rtimes \mathbb{R}^2$ of $\mathbb{R}^2$. All this material is defined in Section \ref{subsecveech}. Such a group of affine diffeomorphisms admits a natural representation in $ \mathrm{SL}(2, \R)$; we call the image of this representation the Veech group that we denote by $\V$. This new group naturally acts on the set of directions of $\mathbb{R}^2$. The directions $\theta$ which are $\SL$-equivalent through $\discoveech$ correspond to two foliations which are conjugated thus sharing the same dynamical behaviour. This remark will allow us to considerably reduce the number of parameters $\theta$ (equivalently $t$) that we need to analyse.
 \vspace{2mm}

 \noindent  Using a standard construction of affine diffeomorphisms using flat cylinder decompositions recalled in Subsection \ref{subsecgroup}, we show that the group $\discoveech$  is discrete and contains the following group

$$ \veechsg = \big\langle \begin{pmatrix}
  1 & 6\\
  0 & 1
\end{pmatrix}, \begin{pmatrix}
  1 & 0\\
  \frac{3}{2} & 1
\end{pmatrix}, \begin{pmatrix}
  -1 & 0\\
  0 & -1
\end{pmatrix} \big\rangle. $$

\noindent The matrix $- \mathrm{Id}$ belonging to $\V$ it is natural to project $\veechsg$ to $\mathrm{PSL}(2, \R)$. We will hereafter make the slight abuse of notation to denote the image of $\veechsg$ by this projection $\veechsg$ as well. This group is a Schottky group of rank $2$. The study of this action is performed in Section \ref{veechgroup} and leads to the following. \\

\begin{itemize}

\item There is a Cantor set $\Lambda_{\veechsg} \subset \mathbb{RP}^1$
  of  measure  zero on which $\veechsg$ acts minimally
  ($\Lambda_{\veechsg}$ is the limit set of $\Gamma$).

\item The action of $\veechsg$ on
  $\Omega_{\veechsg} = \mathbb{RP}^1 \setminus \Lambda_{\veechsg}$ is
  properly discontinuous and the quotient is homeomorphic to a circle
  ($\Omega_{\veechsg}$ is the discontinuity set of $\Gamma$). It
  allows us to identify a "small" fundamental domain
  $I \subset \mathbb{RP}^1$ such that the description of the
  dynamics of foliations in directions $\theta \in I$ implies the
  description for every parameter in $\Omega_{\veechsg}$ (which is an open set of full measure). \\
\end{itemize}

\noindent Note that the Cantor set $\Lambda_{\Gamma}$ has nothing to do with the one described in Theorem \ref{thm2}. The latter is a subset of $\Omega_{\Gamma}$.

\subsubsection*{Affine Rauzy-Veech induction.} The study of the directional foliations for $\theta \in I$ reduces to the understanding of the dynamics of piecewise contracting affine $2$-intervals maps. To perform the dynamical study of these applications we adapt in this $2$-contracting intervals setting a well known re-normalisation procedure, the Rauzy-Veech induction. The outcome of this method may be summarised as follows:

\begin{itemize}
\item there is a Cantor set of measure zero of parameters
  $\theta \in \Omega_{\Gamma}$ for which the associated foliation
  accumulates to a set which is locally a product of a Cantor set with an interval ;
\item other directions in $\Omega_{\Gamma}$ are dynamically
  trivial.
\end{itemize}

\vspace{2mm}

\noindent A remarkable corollary of the understanding of the dynamics
of the directions in $I$ is the complete description of $\V$:

\begin{theorem}
  \noindent The Veech group of $\Sigma$ is exactly $\Gamma$.
\end{theorem}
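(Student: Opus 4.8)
The inclusion $\veechsg \subseteq \V$ having already been established, the entire content of the statement is the reverse inclusion $\V \subseteq \veechsg$. The plan is to leverage two facts available at this point: that $\V$ is a \emph{discrete} subgroup of $\SL$, and that two directions of $\RPone$ lying in the same $\V$-orbit carry directional foliations with identical dynamical behaviour. The strategy is to show that every $g \in \V$ must respect the dynamical stratification of $\RPone$ computed in the previous sections, and that this stratification is rigid enough to force $g \in \veechsg$.

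The cleanest dynamical invariant to track is the set of \emph{completely periodic} directions, i.e.\ those $\theta$ for which $F_t^{p(t)} = \mathrm{Id}$ for some $p(t) \in \mathbb{N}^*$. By Theorem \ref{thm2} this set is countable, and it is exactly the $\veechsg$-orbit of the two parabolic fixed directions $0$ and $\infty$ of the generators $\begin{pmatrix} 1 & 6 \\ 0 & 1\end{pmatrix}$ and $\begin{pmatrix} 1 & 0 \\ \frac32 & 1\end{pmatrix}$; these are precisely the directions carrying a cylinder decomposition. Since being completely periodic is a conjugation-invariant property of the first-return map, every $g \in \V$ permutes this countable set. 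As the parabolic fixed points of a geometrically finite Schottky group are dense in its limit set, the closure of this set is $\Lambda_{\veechsg}$, and therefore $g(\Lambda_{\veechsg}) = \Lambda_{\veechsg}$. In particular $\V$ is contained in the setwise stabiliser $\Stab(\Lambda_{\veechsg})$, and, $\V$ being discrete, we also read off $\Lambda_{\V} = \Lambda_{\veechsg}$, hence $\Omega_{\V} = \Omega_{\veechsg}$.

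Next I would upgrade this to a finite-index statement. Because $\Omega_{\veechsg}/\veechsg$ is a circle, hence compact, and $\V$ is a discrete group with $\veechsg \leq \V$ and $\Omega_{\V} = \Omega_{\veechsg}$, the subgroup $\veechsg$ has finite index in $\V$; it therefore suffices to exclude a single coset, i.e.\ to rule out the existence of $g \in \V \setminus \veechsg$. Fixing such a $g$ and composing with an appropriate element of $\veechsg$, one may assume that $g$ maps the fundamental domain $I \subset \Omega_{\veechsg}$ into $\Omega_{\veechsg}$ in a way that carries the trace of the exceptional set $\mathcal{E} \cap I$ (the Cantor set of directions accumulating on a transverse Cantor set, produced in Section \ref{secgendir}) onto itself, since $\V$ preserves the dynamical type "accumulation on a transverse Cantor set" just as it preserves complete periodicity.

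The contradiction is then extracted from the explicit self-similar description of $\mathcal{E} \cap I$ furnished by the affine Rauzy–Veech induction. The induction realises $\mathcal{E} \cap I$ as the attractor of an explicit renormalisation scheme whose combinatorics single out finitely many distinguished directions of $I$ — the endpoints of the maximal dynamically trivial sub-intervals together with the images of $0$ and $\infty$. Any admissible $g$ must permute these finitely many directions compatibly with the renormalisation, which pins $g$ down modulo $\veechsg$ to a finite list of candidates, each of which is checked by inspection to already lie in $\veechsg$. The main obstacle is exactly this final step: establishing that the combinatorial rigidity of the Rauzy–Veech attractor admits no affine symmetry beyond those coming from $\veechsg$ itself. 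Everything preceding it is soft — invariance of dynamical type and discreteness of $\V$ — whereas here one must exploit the fine structure of the induction and, in particular, the absence of any extra symmetry of the renormalisation data carried by $I$.
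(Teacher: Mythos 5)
Your opening moves are sound and run parallel to the paper's own: the paper likewise begins by showing that every element of $\V$ preserves $\Lambda_{\veechsg}$ (it uses the directions carrying an attracting leaf of dilatation parameter $2$ as the conjugation-invariant marker where you use complete periodicity; both work, since the analysis of Section \ref{secgendir} shows that no direction of $\Omega_{\veechsg}$ is completely periodic, so the closure of your countable set is indeed $\Lambda_{\veechsg}$), and it then deduces $[\V : \veechsg] < \infty$ from the finiteness of the volume of the convex core, which is the same compactness phenomenon as your circle-covering argument.

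The genuine gap is the final step. You reduce to ruling out a single $g \in \V \setminus \veechsg$ and then appeal to an unproved ``combinatorial rigidity of the Rauzy--Veech attractor'', which you yourself flag as the main obstacle; as written this is not a proof. Two concrete problems: (i) it is not justified that, after composing with an element of $\veechsg$, such a $g$ carries the fundamental interval $I$ into itself preserving $\mathcal{E}\cap I$ --- this would essentially require $\veechsg$ to be normal in $\V$, which you never establish; (ii) the ``finitely many distinguished directions'' you propose to track (endpoints of the maximal dynamically trivial subintervals) form a countably infinite set, and no finite list of coset candidates is actually produced or checked. The paper closes this gap by a different route: it first proves that $\veechsg$ is normal in $\V$ by showing any $g\in\V$ conjugates the parabolics $A$ and $B$ back into $\veechsg$ (a proper root of $A$ or $B$ in $\V$ would enlarge the limit set), and then analyses the action of a coset representative on the single boundary geodesic of the convex core: an elliptic $g$ would have to swap the two cusps, which is impossible because they correspond to cylinder decompositions with different numbers of cylinders, while a hyperbolic $g$ sharing its axis with $h = A^{-1}B$ must have the same translation length because $]1,2[$ is the \emph{unique} subinterval of $I$ of directions with an attracting leaf of dilatation parameter $2$; hence $g = h \in \veechsg$. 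Some argument of this kind --- pinning down the possible coset representatives via concrete, verifiable dynamical asymmetries rather than an asserted rigidity --- is exactly what your proposal is missing.
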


\noindent The proof is a rather straightforward corollary of the dynamical description.
We prove that the limit set of $\V$ is actually the same as the one of $\Gamma$, and
conclude using some elementary geometric arguments to prove that these groups are equal. \\

\paragraph*{\bf Acknowledgements.}
We are grateful to Bertrand Deroin for his encouragements and the
interest he has shown in our work. We are also very thankful to Pascal
Hubert for having kindly answered the very many questions we asked
him, to Vincent Delecroix, to Nicolas Tholozan for interesting discussions around the proof
of Theorem \ref{veechgroup1} and to Matt Bainbridge.

\section{Dilatation surfaces and their Veech groups.}

\label{secaffsur}

We introduce in this section geometric objects which
will play a role in this paper. This includes the definition of a dilatation surface,
their associated foliations as well as their Veech groups, the construction of the
Disco surface and how it is linked to our family of AIETs. We also compute
explicitly two elements of the Veech group of the Disco surface.

\subsection{Dilatation surfaces and their foliations.}
\label{subsec dilatation }

\begin{definition}
	\label{defdilatation}
  A \textit{dilatation surface} is a surface $\Sigma$ together with a
  finite set $S \subset \Sigma$ and an atlas
  $\mathcal{A} = (U_i, \varphi_i)_{i \in I}$ on $\Sigma \setminus S$
  whose charts $\varphi_i$ take values in $\C$ such that

  \begin{itemize}

  \item the transition maps are locally restriction of elements of
    $\mathrm{Aff}_{\R^*_+}(\C) = \{ z \mapsto az+b \ | \ a \in \R, \ a>0,  \ b \in \C \} $;

  \item each point of $S$ has a punctured neighbourhood which is
    affinely equivalent to the $k$-sheets covering of $\mathbb{C}^*$.

  \end{itemize}

  \noindent We call an element of the set $S$ a \textit{singularity}
  of the dilatation surface $\Sigma$.

\end{definition}

\noindent This definition is rather formal, and the picture one has to have in mind is that a dilatation surface is what one gets when you take a union of Euclidean polygons and glue together pairs of oriented parallel sides along the unique complex affine transformation that sends one to the other.  \\

\subsubsection{The Disco surface} The surface we are about to define will be the main object of interest
of this text. It is the surface obtained after proceeding to the
gluing below:

\begin{figure}[!h]
  \centering
  \includegraphics[scale=0.8]{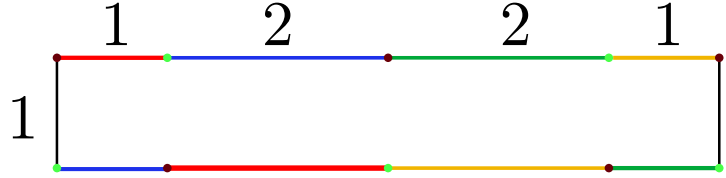}
  \caption{The surface $\disco$. Each side of the same color is identified with the corresponding one by the unique complex affine map of the form $z \mapsto a z + b $ with $a >0$. }
  \label{disco}
\end{figure}

\noindent We call the resulting surface 'Disco' surface. In the following
$\disco$ will denote this particular surface. This is a genus $2$ dilatation surface which has two singular
  points of angle $4\pi$. They correspond to the vertices of the
  polygon drawn in Figure \ref{disco}. Green (light) ones project onto one
  singular point and brown (dark) ones project onto the other.

\subsubsection{Foliations and saddle connections. }
\label{subsec foliation}
Together with a dilatation surface comes a natural family of
foliations. Fix an angle $\theta \in S^1$ and consider the trivial
foliation of $\C$ by straight lines directed by $\theta$. This
foliation being invariant by the action of
$\mathrm{Aff}_{\R^*_+}(\C)$, it is well defined on
$\Sigma \setminus S$ and extends at points of $S$ to a singular
foliation on $\Sigma$ such that its singular type at a point of $S$ is
saddle-like. We denote this family of foliations by $(\mathcal{F}_{\theta})_{\theta \in S^1}$.
\vspace{3mm}

\noindent A \textit{saddle connection} on $\Sigma$ is a singular leave that goes from a singular point to another. The set of saddle connections of a dilatation surface is
countable hence so is the set of directions having saddle connections.\\

\noindent In the case of the Disco surface, one can easily draw these foliations on its polygonal model : they correspond to the restriction of the directional foliations of $\mathbb{R}^2$ to the polygon.  One can check that the horizontal curve on the picture below is actually a cross-section for every foliation $\mathcal{F}_{\theta}$ with $\theta \neq 0$.

\begin{figure}[!h]
  \centering
  \includegraphics[scale=0.8]{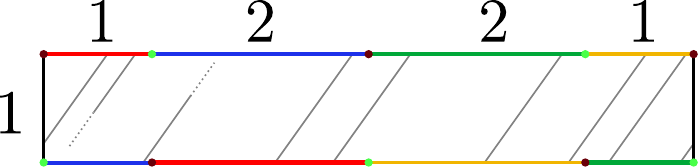}
  \caption{The surface $\disco$ and a leaf of a directional foliation.}
  \label{disco}
\end{figure}

\noindent The first return map $\varphi_{\theta}$ of the foliation $\mathcal{F}_{\theta}$ with respect to this cross-section satisfies :

$$ \varphi_{\theta} =  F_t \hspace{0.5cm} \text{with} \hspace{0.5cm} t = \frac{6}{\tan\theta} \ .$$

\subsection{The Veech group of a dilatation surface.}
\label{subsecveech}

Let $\Sigma$ be a dilatation surface and $g \in \Diff^+(\Sigma)$ an \textit{affine
  diffeomorphism} of $\Sigma$, namely a diffeomorphism which reads in dilatation coordinates as an element of the affine group $\mathrm{GL}^+(2,\R) \ltimes \R^2$ of $\mathbb{R}^2$ with the
standard identification $\C \simeq \R^2$ (more explicitly, a map of the form
$$ \begin{pmatrix}
  x \\ y
\end{pmatrix} \mapsto A \begin{pmatrix} x \\ y
\end{pmatrix} + B $$
where $A \in \mathrm{GL}_+(2,\mathbb{R})$ and $B$ is a vector
of $\mathbb{R}^2$). We denote by $\mathrm{Affine}(\Sigma)$ the subgroup of $\Diff^+(\Sigma)$ of
affine diffeomorphisms. The linear part in coordinates of an element of
$\mathrm{Affine}(\Sigma)$ is well defined up to multiplication by a
constant $\lambda \in \R^*_+$. This gives rise to a well-defined
morphism:
$$ \rho : \mathrm{Affine}(\Sigma) \longrightarrow \mathrm{SL}(2, \R) $$

\noindent which to an affine diffeomorphism associates its \textit{normalised}
linear part. We call this morphism the \textit{Fuchsian representation}.

\begin{remark}
  It is important to understand that the fact that the image of $\rho$
  lies in $\SL$ is somewhat artificial and that the space it naturally
  lies in is $\mathrm{GL}^+(2,\R)/ \R^*_+$. In particular, when an
  element of the Veech group is looked at in charts, there
  is no reason the determinant of its derivative should be equal to
  $1$, however natural the charts are.
\end{remark}

\begin{definition} The image of the Fuchsian representation $\rho(\mathrm{Affine}(\Sigma))$ is called the \textit{Veech group} of $\Sigma$ and is denoted by $\V$. The Veech group naturally acts on the circle $\mathbb{S}^1$, we will refer hereafter to this action as the \textbf{projective action} of the Veech group.
\end{definition}

\noindent The key point is that such an affine diffeomorphism $g$ maps the $\theta$-directional foliation
onto the foliation associated to the direction $ \rho(g) (\theta) $, in particular these two foliations are conjugated and therefore have the same dynamical behaviour. This allows us to reduce the amount of directional foliations to study to the set of parameters corresponding to the quotient of the circle $S^1$ by the projective action of the Veech group.

\subsubsection{About the Veech group of $\disco$.} This subsection is devoted to computing two elements of the Veech group. We utilise a method which is standard for translation surfaces, which consists in decomposing $\Sigma$ into flat cylinders of commensurable moduli and to let the multi-twist associated act affinely on each cylinder as a parabolic element.  \\
\label{subsecgroup}

\paragraph{\bf Flat cylinders}

A \textit{flat cylinder} is the dilatation surface you get when gluing two
opposite sides of a rectangle. The \textit{height} of the cylinder is
the length of the sides glued together and its \textit{width}
is the length of the non-glued sides, that is the boundary components
of the resulting cylinder. Of course, only the ratio of these two
quantities is actually a well-defined invariant of the flat cylinder,
seen as a dilatation surface. More precisely, we define

$$ m = \frac{\text{width}}{\text{height}}$$ and call this quantity
the \textit{modulus} of the associated flat cylinder.

\noindent If $C$ is a cylinder of modulus $m$, there is an element of
$f \in \mathrm{Affine}(C)$ which has the following properties:

\begin{itemize}

\item $f$ is the identity on $\partial C$;

\item $f$ acts as a unique Dehn twist of $C$;

\item the matrix associated to $f$ is $\begin{pmatrix}
    1 & m \\
    0 & 1
  \end{pmatrix}$, if $\partial C$ is assumed to be in the horizontal
  direction.

\end{itemize}

\paragraph{\bf Decomposition in flat cylinders and parabolic elements
  of the Veech group.}

We say a dilatation surface $\Sigma$ has a \textit{decomposition in flat
  cylinders} in a given direction(say the horizontal one) if there
exists a finite number of saddle connections in this direction whose
complement in $\Sigma$ is a union of flat cylinders. If additionally
the flat cylinders have commensurable moduli, the Veech group of
$\Sigma$ contains the matrix $$\begin{pmatrix}
  1 & m'\\
  0 & 1
\end{pmatrix}$$ where $m'$ is the smallest common multiple of all the
moduli of the cylinders appearing in the cylinder decomposition. If
the decomposition is in an another direction $\theta$, the Veech group
actually contains the conjugate of this matrix by a rotation of angle
$\theta$. Moreover, an affine diffeomorphism realising this matrix
is a Dehn twist along the multi-curve made of all the simple closed
curves associated to each of the cylinders of the decomposition.

\vspace{2mm}

\paragraph{\bf Calculation of elements of the Veech group of
  $\disco$.}  The above paragraph allows us to bring to light two
parabolic elements in $\discoveech$. Indeed, $\disco$ has two cylinder
decompositions in the horizontal and vertical direction.

\begin{itemize}

\item The decomposition in the horizontal direction has one cylinder
  of modulus $6$, represented Figure \ref{cylinder1prime} below.

\begin{figure}[!h]
  \centering
  \includegraphics[scale=0.8]{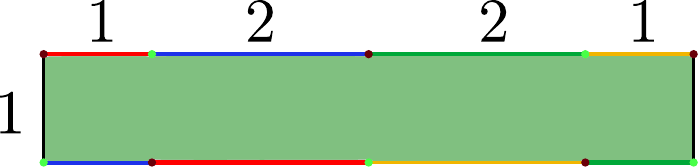}
  \caption{Cylinder decomposition in the horizontal direction.}
  \label{cylinder1prime}
\end{figure}

\noindent Applying the discussion of the last paragraph gives, we get
that the matrix
$\begin{pmatrix}
  1 & 6\\
  0 & 1
\end{pmatrix}$
belongs to $\discoveech$.\\

\item The decomposition in the vertical direction has two cylinders,
  both of modulus $\frac{3}{2}$, represented in Figure \ref{cylinder2} below.

\begin{figure}[!h]
  \centering
  \includegraphics[scale=0.8]{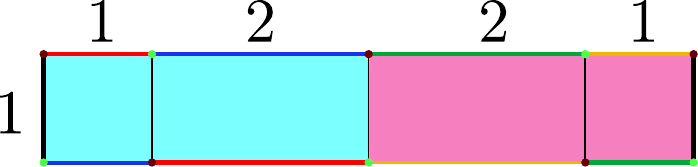}
  \caption{Cylinder decomposition in the vertical direction.}
  \label{cylinder2}
\end{figure}

\noindent Again, we get that the matrix
$\begin{pmatrix}
  1 & 0\\
  \frac{3}{2} & 1
\end{pmatrix}$
belongs to $\discoveech$.

\end{itemize}

\noindent Finally notice that both the polygon and the gluing
pattern we used to build $\disco$ is invariant by the rotation of angle $\pi$,
which implies that the matrix
$$\begin{pmatrix}
  -1 &  0\\
   0 & -1
\end{pmatrix}$$
is realised by an involution in
$\mathrm{Affine}(\disco)$. Putting all the pieces together we get,

\begin{proposition}
  The group

  $$
  \left< A =
  \begin{pmatrix}
    1 & 6 \\
    0 & 1
  \end{pmatrix},
  \hspace{0.3cm}
  B =
  \begin{pmatrix}
              1 & 0 \\
    \frac{3}{2} & 1
  \end{pmatrix},
  \hspace{0.3cm}
  - \mathrm{Id} \right> $$
  is a subgroup of $\discoveech$.
\end{proposition}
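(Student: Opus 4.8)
The plan is to observe that the statement is, by design, a formal consequence of the three computations carried out in the preceding paragraphs, once one recalls the group-theoretic nature of the Veech group. Since the Fuchsian representation $\rho : \mathrm{Affine}(\disco) \longrightarrow \SL$ is a group homomorphism, its image $\discoveech = \rho(\mathrm{Affine}(\disco))$ is a subgroup of $\SL$; in particular it is closed under products and inverses. Hence to prove the proposition it suffices to exhibit, for each of the three listed matrices, an affine automorphism of $\disco$ whose normalised linear part is that matrix, and then invoke closure.

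First I would dispatch the two parabolic generators by appealing to the discussion of flat cylinder decompositions. For $\begin{pmatrix} 1 & 6 \\ 0 & 1 \end{pmatrix}$ I would use that $\disco$ decomposes in the horizontal direction into a single flat cylinder of modulus $6$ (Figure \ref{cylinder1}): by the paragraph on Dehn twists, the twist along its core curve is an affine automorphism whose normalised linear part is precisely this matrix. For $\begin{pmatrix} 1 & 0 \\ \frac{3}{2} & 1 \end{pmatrix}$ I would use the vertical decomposition into two flat cylinders, both of modulus $\frac{3}{2}$ (Figure \ref{cylinder2}); the two moduli being equal, they are in particular commensurable, so the simultaneous Dehn twist along both core curves realises the upper-triangular matrix $\begin{pmatrix} 1 & \frac{3}{2} \\ 0 & 1 \end{pmatrix}$ conjugated by the rotation carrying the vertical direction onto the horizontal one, which is exactly the stated lower-triangular matrix.

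For the central generator $-\mathrm{Id}$ I would exploit the symmetry of the construction already noted above: the rotation by $\pi$ about the centre of the defining polygon preserves both the polygon and its side-pairing pattern, and therefore descends to an orientation-preserving involution of $\disco$ whose derivative in the affine charts is $-\mathrm{Id}$. This places $\begin{pmatrix} -1 & 0 \\ 0 & -1 \end{pmatrix}$ in $\discoveech$. With all three generators lying in the subgroup $\discoveech < \SL$, the group they generate is contained in $\discoveech$, which is the assertion.

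The only genuinely geometric input — as opposed to formal group theory — is the verification that the complements of the relevant saddle connections in the horizontal and vertical directions really are flat cylinders with no interior singularities, and that their moduli are $6$ and $\frac{3}{2}$ respectively. These amount to reading off the pertinent side lengths and heights from the explicit polygon of Figure \ref{disco} (together with a Euler characteristic check that the cylinders carry no singular points, as in the proof of Proposition \ref{nb cylinders}), and that small verification is where I would expect whatever effort the proof requires to reside.
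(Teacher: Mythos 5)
Your proposal is correct and follows essentially the same route as the paper: the two parabolic generators are obtained from the horizontal (one cylinder, modulus $6$) and vertical (two cylinders, common modulus $\tfrac{3}{2}$) flat cylinder decompositions via the associated affine Dehn twists, $-\mathrm{Id}$ is realised by the order-two rotational symmetry of the defining polygon and its gluing pattern, and the conclusion follows because $\discoveech$ is the image of the Fuchsian representation and hence a subgroup. The paper treats the proposition as a summary of exactly these three observations, so there is nothing to add.
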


\section{The hyperbolic geometry of $\Gamma$ }
\label{veechgroup}

\subsection{ The subgroup $\veechsg$ }
\label{subsecgroup2}
We computed in Section \ref{secaffsur} three elements $A$, $B$ and $-\mathrm{Id}$ of the Veech group
of $\disco$. The presence of the matrix $-\mathrm{Id}$ in $\V$ indicates
that directional foliations on the surface $\disco$ are invariant by
reversing orientation. This motivates the study of the Veech group action on
$\mathbb{RP}^1 := \quotient{S^1}{\mathrm{-Id}}$ instead of
$S^1$. \newline

\noindent We will often identify $\mathbb{RP}^1$ to the interval
$[-\frac{\pi}{2},\frac{\pi}{2})$ by using projective coordinates:

$$ \fonctionbis{\mathbb{RP}^1}{ [-\frac{\pi}{2}, \frac{\pi}{2} ]}{ \left[  \begin{pmatrix}
      x \\ y
    \end{pmatrix} \right] }{ \arctan \left( \frac{x}{y} \right)} $$

\noindent At the level of the Veech group it means projecting it to
$\mathrm{PSL_2}(\mathbb{R})$ by the canonical projection $\pi$. Let us
denote by
$\veechsg \subset \pi( \discoveech) \subset
\mathrm{PSL}_2(\mathbb{R})$ the group generated by the two elements:

$$ \veechsg :=
\left< A =
  \begin{pmatrix}
    1& 6 \\
    0 & 1
  \end{pmatrix}
  \hspace{0.3cm}, B =
  \begin{pmatrix}
    1& 0 \\
    \frac{3}{2} & 1
  \end{pmatrix}
\right> $$

\noindent We will study the group $\veechsg$ as a Fuchsian group, that
is a \textbf{discrete} group of isometries of the real hyperbolic
plane $\mathbb{H}^2$. For the action of a Fuchsian group $\Phi$ on
$\mathbb{RP}^1$, there are two invariant subsets which we will
distinguish:

\begin{itemize}

\item one called its \textit{limit set} on which $\Phi$ acts minimally
  and that we will denote by $\Lambda_{\Phi} \subset \mathbb{RP}^1$;

\item the complement of $\Lambda_{\Phi}$ which is called its
  \textit{discontinuity set}, on which $\Phi$ acts properly and
  discontinuously and which we will denote by $\Omega_{\Phi}$.

\end{itemize} We will give precise definitions in Section \ref{subsec
  the limit set }. In restriction to the discontinuity set, one can
form the quotient by the action of the group. The topological space
$\quotient{\Omega_{\Phi}}{\Phi}$ is a manifold of dimension one: a
collection of real lines and circles. \newline
\noindent We will show in Proposition \ref{proposition discontinuity
  set} that for the group $\Gamma$ this set is a single circle, and
therefore a fundamental domain $I$ for the action of the group
$\veechsg$ can be taken to be a single interval (we will make it
explicit:
$I = [\arctan(\frac{1}{4}),\arctan(1)] \subset [- \frac{\pi}{2},\frac{\pi}{2}[ \simeq \mathbb{RP}^1 $).
The dynamic of the directional foliations in the directions $\theta$
belonging to the interior of the interval $I$ will be studied in
Section \ref{secgendir}.

\begin{remark}
  We will prove in Section \ref{secglopic} that the group $\veechsg$
  is actually equal to the full Veech group of the surface $\disco$.
\end{remark}

\subsection{The action of the group $\veechsg$ on $\Hy$.}
\label{subsecfucgam}

Two hyperbolic isometries $A,B \in \mathrm{Isom_+(\mathbb{H}^2)}$ are
said to be in \textbf{Schottky position} if the following condition
holds:\\
\begin{center}

  There exists four disjoints domains $D_i$, $ 1 \le i \le 4$ which
  satisfy
$$ A ( D_1^c) = D_2 \hspace{2 cm } B ( D_3^c) = D_4 $$
where $D_i^c$ denotes the complementary set of $D_i$.

\end{center}

\vspace{2mm}
\noindent A group generated by two elements in Schottky position is
also called a \textbf{Schottky group}. Figure \ref{figthrsph} illustrates this situation.

\begin{proposition}
\label{fundomain}
  The group $\veechsg$ is Schottky. Moreover the surface
  $M_{\veechsg}$ is a three punctured sphere with two cusps and one
  end of infinite volume.
\end{proposition}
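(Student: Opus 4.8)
The plan is to verify directly that the pair $(A,B)$ is in Schottky position by exhibiting the four domains $D_1,\dots,D_4$, and then to read off the topology of $M_{\veechsg}=\mathbb{H}^2/\veechsg$ from the resulting fundamental domain. Working in the upper half-plane model, $A\colon z\mapsto z+6$ is parabolic fixing $\infty$, while $B\colon z\mapsto z/(\tfrac32 z+1)$ is parabolic fixing $0$. Writing $t=6$ for the translation length of $A$ and $s=\tfrac32$ for that of $B$, the whole argument is governed by the product $ts=9$, the point being that Schottky position holds precisely when $ts>4$.

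For $A$ I would take the two open half-planes $D_1=\{\operatorname{Re}z<-3\}$ and $D_2=\{\operatorname{Re}z>3\}$, bounded by the vertical geodesics $\operatorname{Re}z=\pm 3$ (both landing at the fixed point $\infty$); since $A$ is the translation by $t=6$, one checks at once that $A(D_1^c)=D_2$. For $B$ I would take $D_3$ and $D_4$ to be the half-discs bounded by the isometric circles of $B$ and $B^{-1}$, namely the geodesics joining $-4/3$ to $0$ and $0$ to $4/3$ (centres $\mp 1/s=\mp 2/3$, radii $1/s=2/3$); the standard isometric-circle (Ford) picture then gives $B(D_3^c)=D_4$. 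The only remaining point is pairwise disjointness: $D_1,D_2$ are tangent at $\infty$ and $D_3,D_4$ are tangent at $0$, so the sole constraint is that the $B$-discs, which span $\operatorname{Re}z\in[-2/s,2/s]=[-4/3,4/3]$, lie strictly inside the strip $\{|\operatorname{Re}z|<t/2\}=\{|\operatorname{Re}z|<3\}$ cut out by the $A$-geodesics. This is exactly the inequality $2/s<t/2$, i.e. $ts>4$, which holds with room to spare as $ts=9$. By the definition recalled above this shows $\veechsg$ is Schottky, in particular free of rank $2$ and discrete.

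It remains to identify $M_{\veechsg}$. Since $\veechsg$ is free of rank $2$ and torsion-free it acts freely on $\mathbb{H}^2$, so $M_{\veechsg}$ is an orientable surface with $\pi_1\cong F_2$, whence $\chi(M_{\veechsg})=-1$. In a Schottky group generated by two parabolics the only non-hyperbolic elements are the conjugates of powers of the generators (any cyclically reduced word alternating $A^{\pm1}$ and $B^{\pm1}$ has distinct source and sink on $\RPone$ by the ping-pong dynamics, hence is hyperbolic), so there are exactly two cusps, associated to $A$ and $B$. Moreover $\veechsg$ is not a lattice: the gaps $(-3,-4/3)$ and $(4/3,3)$ between the feet of the Schottky geodesics are nonempty precisely because $ts>4$, so $M_{\veechsg}$ has infinite volume and at least one funnel. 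Writing $g$ for the genus and $f$ for the number of funnels, the relation $\chi=2-2g-(2+f)=-1$ forces $2g+f=1$, hence $g=0$ and $f=1$. Thus $M_{\veechsg}$ is a three-punctured sphere, two of whose ends are the cusps coming from $A$ and $B$ and the third an infinite-volume funnel.

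The routine part is the disjointness computation; the step I expect to require the most care is the last one, namely pinning down the geometric type of each of the three ends, i.e. checking that there are exactly two cusps and exactly one funnel with no accidental extra ends or identifications. The Euler-characteristic bookkeeping $2g+f=1$ is what makes this rigorous, but I would double-check it against the explicit fundamental domain, the strip $\{|\operatorname{Re}z|\le 3\}$ with the two tangent half-discs removed, glued by $A$ along its vertical sides and by $B$ along the isometric circles, where the two cusps at $\infty$ and $0$ and the single funnel produced by the two boundary gaps are directly visible.
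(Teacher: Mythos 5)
Your proof is correct and follows essentially the same route as the paper's: both verify Schottky position by exhibiting explicit domains for $A$ and $B$ in the upper half-plane (the paper bounds them by the geodesics with feet at $\pm 3$ and at $-1,0,2$, while you use the symmetric isometric circles with feet at $0$ and $\pm 4/3$; both choices satisfy the required disjointness) and then read off the quotient surface from the resulting fundamental domain. Your Euler-characteristic bookkeeping and the count of parabolic conjugacy classes make rigorous the final identification of the two cusps and the single funnel, a step the paper settles by inspection of its figure.
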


\begin{proof}
  Viewed in the upper half plane model of $\mathbb{H}^2$ the action is
  easily shown to be Schottky. In fact the action of $A$ \resp $B$)
  becomes $ z \mapsto z + 6 $ \resp
  $z \mapsto \frac{z}{\frac{3z}{2} + 1}$).  The two matrices are
  parabolic and fix $\infty$ and $0$ respectively.  Moreover, we
  observe that $A(-3) = 3$ and $B(-1)=2$.  Figure \ref{figthrsph}
  below shows that the two matrices $A$ and $B$ are in Schottky
  position with associated domains $D_i$ for $1 \le i \le 4$.

    \begin{figure}[h!]
      \hspace*{1cm}
      \begin{subfigure}{.45\linewidth}
        \hspace{-1cm}
        \includegraphics[height=4cm]{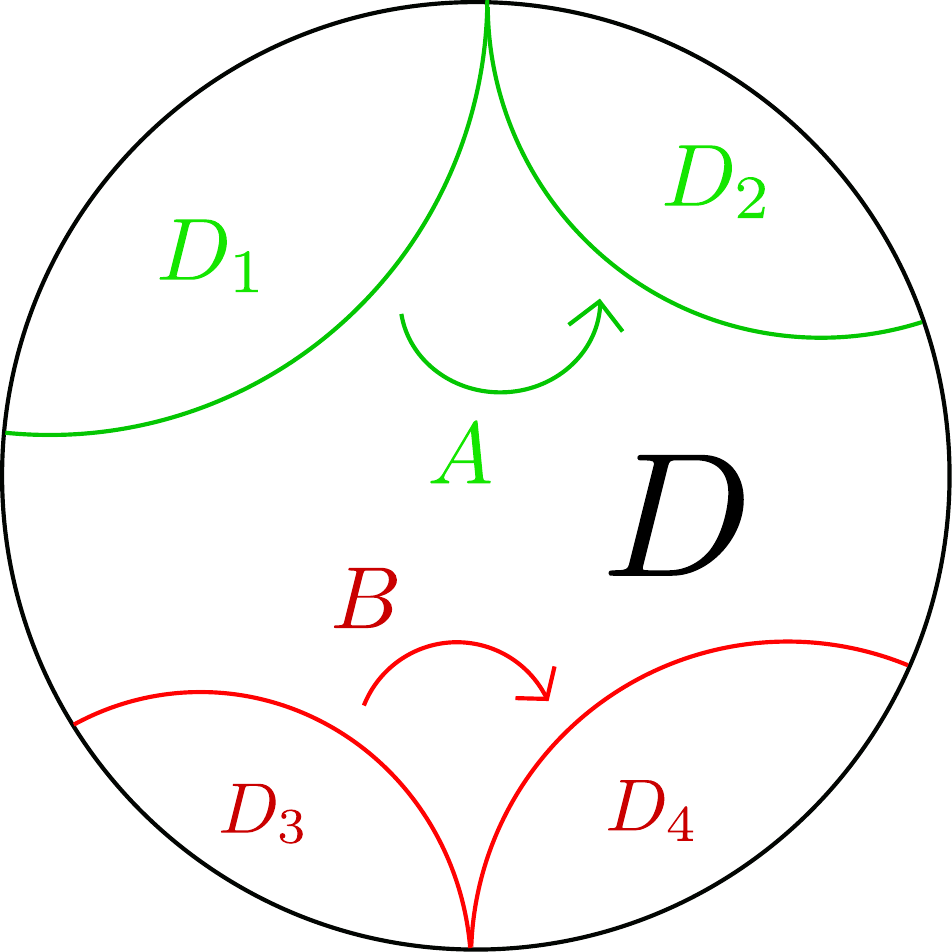}
      \end{subfigure}
      \begin{subfigure}{.4\linewidth}
        \centering
        \includegraphics[height=4cm]{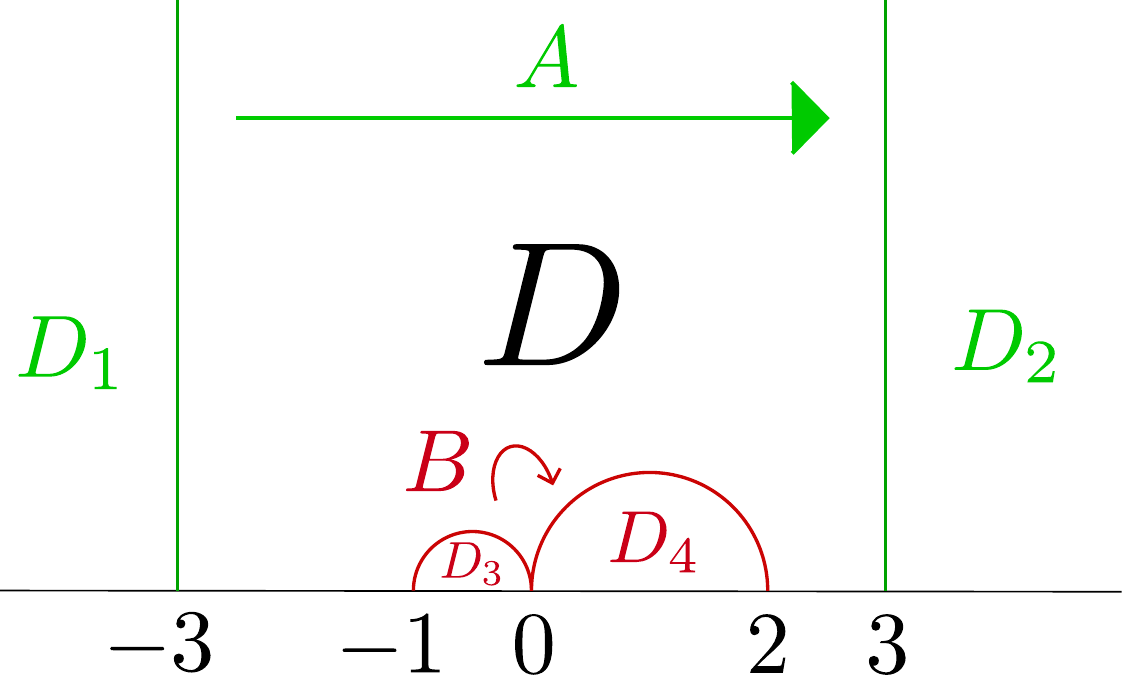}
      \end{subfigure}
      \caption{A fundamental domain for the action of the group
        $\veechsg$ acting on the hyperbolic plane.}
      \label{figthrsph}
    \end{figure}

   \noindent The domain $D$ of Figure \ref{figthrsph} is a fundamental domain
    for the action of $\veechsg$. The isometry $A$ identifies the two
    green (light) boundaries of $D$ together and $B$ the two red (dark) ones. The
    quotient surface is homeomorphic to a three punctured sphere.
  \end{proof}

  \subsection{The limit set and the discontinuity set.}
  \label{subsec the limit set }

  The following notion will play a key role in our analysis of the
  affine dynamics of the surface $\disco$:

\begin{definition}
  The \textbf{limit set} $\Lambda_{\Phi} \subset S^1$ of a Fuchsian
  group $\Phi$ is the set of accumulation points in
  $\mathbb{D} \cup S^1$ of any orbit $\Phi \cdot \{z_0\}$,
  $z_0 \in \mathbb{D}$ where $\mathbb{D} \subset \mathbb{C}$ is the
  disk model for the hyperbolic plane $\mathbb{H}^2$.
\end{definition}

\noindent The complementary set of the limit set is the good tool to
understand the infinite volume part of such a surface.

\begin{definition}
  The complementary set
  $\Omega_{\Phi} := S^1 \backslash \Lambda_{\Phi}$ is by definition the \textbf{set
    of discontinuity} of the action of $\veechsg$ on the circle.
\end{definition}

\noindent The group $\Phi$ acts properly and discontinuously on the
set of discontinuity. One can thus form the quotient space
$\quotient{\Omega_{\Phi}}{\Phi}$ which is a manifold of dimension one
: a collection of circles and real lines.
\noindent These sets are very well understood for Schottky groups
thanks to the ping-pong Lemma, for further details and developments
see \cite{dalhorgeo} chapter 4.

\begin{proposition}[ping-pong lemma]
  \label{prosch}
A Schottky group $\Phi$ is freely generated by any two elements in Schottky position. Moreover
 \begin{itemize}
  \item if the quotient surface $\quotient{\mathbb{H}^2}{\Phi}$ is of finite volume, the limit set is the full circle;
  \item otherwise, the limit set is homomorphic to a Cantor set.
 \end{itemize}
\end{proposition}

\begin{remark}
	The third item case is the one of interest of this article.
\end{remark}

\noindent The following theorem will be used in \ref{secglopic} to
prove our main Theorem \ref{thm1}
\begin{theorem}[Ahlfors, \cite{Ahlfors}]
  \label{thmahl}
  A finitely generated Fuchsian group satisfies the following
  alternative:
  \begin{enumerate}
  \item either its limit set is the full circle $S^1$ ;
  \item or its limit set is of zero Lebesgues measure
  \end{enumerate}
\end{theorem}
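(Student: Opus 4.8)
The plan is to reduce the stated dichotomy to a single zero-one law and then prove that law by a conformal blow-up at a density point. Write $\Phi$ for the finitely generated Fuchsian group, $\Lambda_{\Phi} \subset S^1$ for its limit set and $m$ for Lebesgue measure. It suffices to prove that $m(\Lambda_{\Phi}) > 0$ forces $\Lambda_{\Phi} = S^1$. The reduction rests on two classical structural facts that I would use as black boxes. First, $\Lambda_{\Phi}$ is closed, so its complement $\Omega_{\Phi} = S^1 \setminus \Lambda_{\Phi}$ is open; and a nonempty open subset of $S^1$ contains an arc, hence has positive measure. Second, the limit set of a non-elementary Fuchsian group is either all of $S^1$ or nowhere dense (elementary groups have finite, hence null, limit sets and are trivial). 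Consequently, if I can show that $m(\Lambda_{\Phi}) > 0$ implies that $\Lambda_{\Phi}$ has nonempty interior, then by the second fact $\Lambda_{\Phi} = S^1$ and we are done.

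Next I would install the geometric input coming from finite generation. A finitely generated Fuchsian group is geometrically finite (in the surface case finite generation is equivalent to $\mathbb{H}^2/\Phi$ being of finite type, with a finite-sided fundamental polygon). The only consequence I need is that the non-radial part of the limit set reduces to the $\Phi$-orbits of finitely many bounded parabolic fixed points, hence is countable and therefore Lebesgue-null. Thus, assuming $m(\Lambda_{\Phi}) > 0$, the conical (radial) limit set $\Lambda_c$ carries all of the measure, and since a positive-measure set has density-$1$ points, I may choose a point $\xi$ that is simultaneously a Lebesgue density point of $\Lambda_{\Phi}$ and a conical limit point.

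The heart of the argument is a bounded-distortion blow-up at $\xi$. Because $\xi$ is conical, there are elements $\gamma_n \in \Phi$ and arcs $I_n \downarrow \xi$ such that $\gamma_n^{-1}$ carries $I_n$ onto an arc $J_n$ whose length is bounded below, with uniformly bounded distortion: the nonlinearity of the M\"obius maps $\gamma_n^{-1}$ across the conical return is controlled because the corresponding geodesic segments stay in a fixed compact part of the quotient. Since each $\gamma_n$ is conformal and preserves $\Lambda_{\Phi}$, comparing densities before and after applying $\gamma_n^{-1}$ gives $\tfrac{m(\Lambda_{\Phi} \cap J_n)}{m(J_n)} \geq 1 - C\bigl(1 - \tfrac{m(\Lambda_{\Phi} \cap I_n)}{m(I_n)}\bigr) \to 1$, because $\xi$ is a density point. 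Passing to a subsequence along which $J_n$ converges to an arc $J$ of positive length, I conclude that $m(\Lambda_{\Phi} \cap J) = m(J)$, i.e. $\Omega_{\Phi} \cap J$ is a null open set, hence empty. Therefore $J \subseteq \Lambda_{\Phi}$, so $\Lambda_{\Phi}$ has nonempty interior, and the dichotomy of the first paragraph yields $\Lambda_{\Phi} = S^1$.

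The main obstacle is precisely this blow-up step: making the uniform bounded-distortion estimate along conical returns rigorous and deducing the density comparison. Everything else — closedness of $\Lambda_{\Phi}$, the triviality of null open sets, geometric finiteness in the two-dimensional setting, and the ``$S^1$ or nowhere dense'' trichotomy — is structural and can be quoted, whereas the density-point argument is where the conformal geometry of $\mathrm{PSL}(2,\mathbb{R})$ must be exploited quantitatively. I would expect the cleanest write-up to phrase the distortion control via the Koebe-type estimates for M\"obius maps (or equivalently via the cocycle $\log|\gamma'|$ along the recurrent geodesic), which is the standard mechanism turning recurrence into a Lebesgue zero-one law.
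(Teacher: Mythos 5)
This statement is not proved in the paper at all: it is imported as a black box from Ahlfors (the citation \cite{Ahlfors}), and the authors only use its consequence that $\Lambda_{\Gamma}$ has zero Lebesgue measure because it is visibly not the whole circle. So there is no in-paper proof to compare against, and your proposal must be judged on its own. On those terms it is correct, and it is the standard modern proof in the two-dimensional setting: (i) the limit set of a non-elementary Fuchsian group is either $S^1$ or nowhere dense, so it suffices to show positive measure forces nonempty interior; (ii) finite generation gives geometric finiteness (true for Fuchsian groups, though not for Kleinian groups in higher dimension, which is why this step is genuinely two-dimensional), hence $\Lambda_{\Phi}$ is the conical limit set together with countably many parabolic fixed points; (iii) a Lebesgue density point of $\Lambda_{\Phi}$ that is also conical can be blown up by a sequence $\gamma_n^{-1}$ with uniformly bounded distortion, and invariance of $\Lambda_{\Phi}$ pushes the density $1$ onto arcs of definite length, whose limit is an arc contained in $\Lambda_{\Phi}$ because a null open set is empty. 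The only step that is not purely quotable is the bounded-distortion estimate along conical returns, which you correctly identify as the crux; it is the classical Koebe-type control of $\sup |(\gamma_n^{-1})'| / \inf |(\gamma_n^{-1})'|$ on the shadow of a ball of fixed radius around $\gamma_n o$, and the final passage to the limit arc $J$ should be done by exhausting $J$ by compactly contained subarcs $J' \subset J_n$. It is worth noting that this route is quite different from Ahlfors' original argument, which was an area/finiteness-theorem argument designed for finitely generated Kleinian groups; your density-point proof is more elementary but relies essentially on geometric finiteness, which is automatic only in dimension two.
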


\noindent In our setting, it is clear that the limit set is not the
full circle, thus the theorem implies that the limit set of $\Gamma$
is of zero Lebesgue measure.

\subsection{The action on the discontinuity set and the fundamental
  interval}

\noindent The following proposition is the ultimate goal of this section.

\begin{proposition}
  \label{proposition discontinuity set}
  The quotient space $$\quotient{\Omega_\veechsg}{\veechsg}$$ is a
  circle. A fundamental domain for the action of $\veechsg$ on
  $\Omega_{\veechsg}$ corresponds to the interval of slopes
  $I =[\arctan \left( \frac{1}{4} \right),\arctan(1)]$.
\end{proposition}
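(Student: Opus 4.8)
The strategy is to recognise $\quotient{\Omega_\veechsg}{\veechsg}$ as the ideal boundary of the single infinite-volume end of $M_\veechsg$, and then to read off an explicit fundamental interval from the hyperbolic isometry whose axis bounds that end.

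I work on $\RPone=\partial\Hy$ in the half-plane coordinate of Section~\ref{subsecfucgam}, in which $A$ and $B$ act by the parabolic Möbius transformations $z\mapsto z+6$ and $z\mapsto \frac{z}{\frac32 z+1}$ fixing the two cusps. By the ping-pong Lemma (Proposition~\ref{prosch}) the limit set $\Lambda_\veechsg$ is a Cantor set and $\Omega_\veechsg$ is the disjoint union of its complementary arcs, which I call the \emph{gaps}. Since, by the Schottky description obtained above, $M_\veechsg$ is a three-punctured sphere with two cusps and a single funnel, the general theory of Fuchsian groups (see \cite{dalhorgeo}) gives that $\quotient{\Omega_\veechsg}{\veechsg}$ is a disjoint union of circles, one for each funnel; here this is a single circle. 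This already establishes the first assertion, and it remains only to place the fundamental interval explicitly.

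To do so I identify the boundary isometry of the funnel. Set $g_0=AB^{-1}$; a direct computation gives $\Tr(g_0)=-7$, so $g_0$ is hyperbolic, and I claim that the open arc $J$ bounded by its two fixed directions is a single gap of $\Lambda_\veechsg$ with $\Stab_\veechsg(J)=\langle g_0\rangle$. Granting this, $g_0$ acts on $J$ as a translation: starting from $I=[p,g_0(p)]$ for any $p\in J$, the tiles $\{g_0^n(I)\}_{n\in\Z}$ share consecutive endpoints and exhaust $J$, so $I$ is a fundamental domain for $\langle g_0\rangle\acts J$, hence, because $\Omega_\veechsg=\veechsg\cdot J$ and $\Stab_\veechsg(J)=\langle g_0\rangle$, for $\veechsg\acts\Omega_\veechsg$. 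Computing $g_0(p)$ for the appropriate endpoint $p$ places this interval at $I=[\arctan(\tfrac14),\arctan(1)]$, whose two endpoints are identified by $g_0$; a final check that $A^{\pm1}(I)$ and $B^{\pm1}(I)$ are disjoint from the interior of $I$ confirms that each $\veechsg$-orbit in $\Omega_\veechsg$ meets $I$ exactly once.

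The crux — and the step I expect to be the real obstacle — is the claim that $J$ is a single gap stabilised exactly by $\langle g_0\rangle$, equivalently that there is only one $\veechsg$-orbit of gaps. The delicate feature is that the two parabolic fixed points $0,\infty\in\Lambda_\veechsg$ are accumulation points sitting strictly inside the region swept out by the other gaps, so the ping-pong coding must be organised with horoball neighbourhoods separating the two cusps from the funnel before one can run the induction on reduced words in $\{A^{\pm1},B^{\pm1}\}$ that exhibits every gap as $\gamma(J)$. Once this coding is in place, the primitivity of $g_0$ (giving $\Stab_\veechsg(J)=\langle g_0\rangle$) and the single-orbit statement follow, and these are exactly what pin the quotient down to one circle and $I$ to the stated interval of slopes.
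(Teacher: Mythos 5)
Your proposal follows essentially the same route as the paper: both identify $\quotient{\Omega_\veechsg}{\veechsg}$ with the boundary circle of the unique funnel of the three-punctured sphere $M_\veechsg$ (the paper via Lemma \ref{pro bord cantor} and Corollary \ref{cor bouts compo}, with the same appeal to geometric finiteness that you flag as the crux), and both pin down the interval by recognising $AB^{-1}$ as the hyperbolic element stabilising the relevant gap and computing $AB^{-1}[1:1]=[4:1]$. The step you single out as delicate is exactly the content of the paper's Lemma \ref{pro bord cantor}, which the paper itself only sketches, so your proposal is at the same level of rigour and needs no correction.
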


\noindent Foliations defined by slopes which belong to this precise interval
will be studied in section \ref{secgendir}. To prove this proposition
we will use the associated hyperbolic surface $M_{\veechsg}$ and link
its geometrical and topological properties to the action of the group
$\veechsg$ on the circle. \newline

\noindent The definition of the limit set itself implies that it is invariant by
the Fuchsian group. One can therefore seek a geometric interpretation
of such a set on the quotient surface. We will consider the smallest
convex set (for the hyperbolic metric) which contains all the
geodesics which start and end in the limit set
$\Lambda_{\veechsg}$. We denote it by $C(\Lambda_{\veechsg})$. Because
the group $\veechsg$ is a group of isometries it preserves
$C(\Lambda_{\veechsg})$.

\begin{definition}
  \label{definition convex core}
  The \textbf{convex core} of a hyperbolic surface $M_{\veechsg}$,
  denoted by $C(M_{\veechsg})$ is defined as:
  $$ C(M_{\veechsg}) := \quotient{C(\Lambda_{\veechsg})}{\veechsg} $$
\end{definition}

\noindent As a quotient of a $\veechsg$-invariant subset of $\mathbb{H}^2$, $C(M_{\veechsg})$ is a subset of the surface $M_{\Gamma}$. The convex core of a
Fuchsian group is a surface with geodesic boundary, moreover if the
group is finitely generated the convex core has to be of finite
volume. As a remark, a Fuchsian group is a lattice if and only if we
have the equality $C(M_{\veechsg}) = M_{\veechsg}$. In the special
case of the Schottky group $\veechsg$ the convex core is a surface
whose boundary is a single closed geodesic as it is shown on Figure
$\ref{bout}$. For a finitely generated group we will see that we have
a one to one correspondence between connected components of the
boundary of the convex core and connected component of the quotient of
the discontinuity set by the group.

\begin{figure}[h!]
  \includegraphics[width=.9\linewidth]{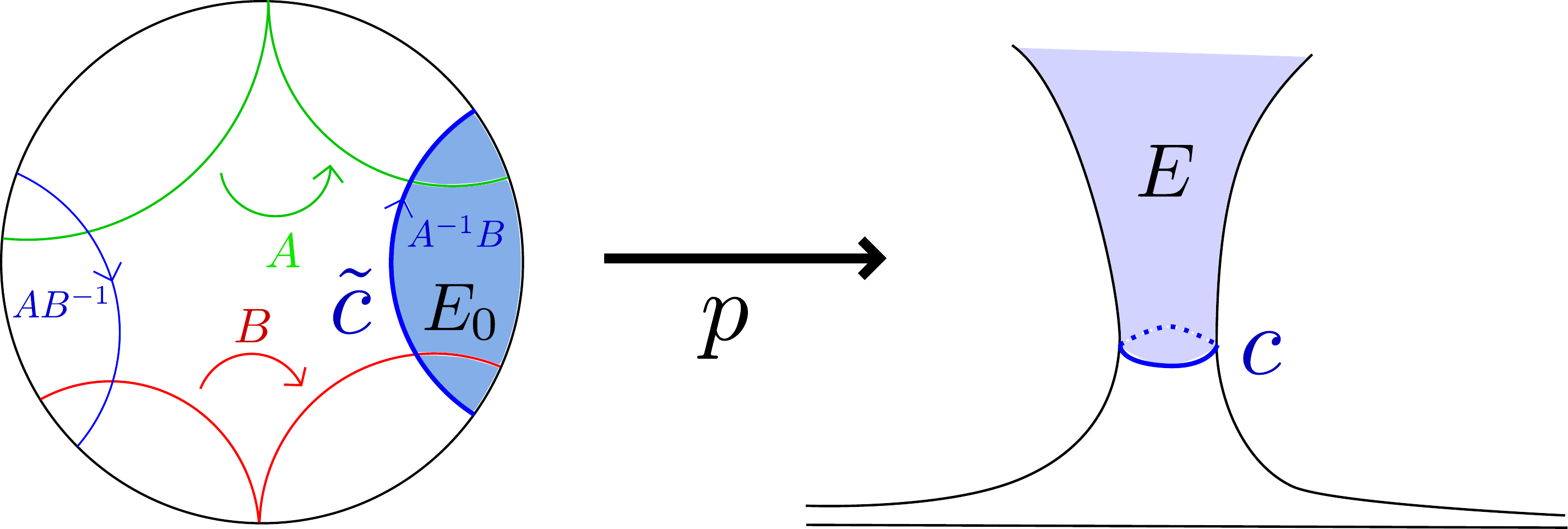}
  \caption{The closed geodesic $c$ cuts the surface $M_{\veechsg}$
    into two pieces. The colored part is the only infinite volume end and
    its complementary is the convex core. The choice of a lift
    $\tilde{c}$ of the geodesic $c$ made on the picture allows us to
    describe the isometry which translate along $\tilde{c}$ in terms
    of the generators of the group $\veechsg$. Indeed the pairing of
    the edges of the fundamental domain given by the action of the
    group $\veechsg$ shows that the geodesic $\tilde{c}$ is the
    translation axes of the matrix $A^{-1}B$.}
  \label{bout}
\end{figure}

The following lemma is the precise formulation of what we discussed
above
\begin{lemma}
  \label{pro bord cantor}
  Let $\veechsg$ a finitely generated Fuchsian group. Any connected
  component $I_0$ of the discontinuity set $\Omega_{\veechsg}$ is
  stabilised by a cyclic group generated by a hyperbolic isometry
  $\gamma_0$. Moreover $ \partial I_0 $ is composed of the two fixed
  points of the isometry $\gamma_0$.
\end{lemma}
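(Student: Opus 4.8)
\emph{Plan.} The proof will exploit the geometric correspondence announced just before the statement: the components of $\Omega_{\veechsg}$ are the pairs of endpoints at infinity of the boundary geodesics of the convex hull $C(\Lambda_{\veechsg})$, and passing to $\veechsg$-orbits matches them with the boundary components of the convex core $C(M_{\veechsg})$. First I would set up the elementary topology. Since $\Lambda_{\veechsg}$ is closed, $\Omega_{\veechsg}$ is open in $S^1$ and $I_0$ is an open arc whose two endpoints $p,q$ lie in $\Lambda_{\veechsg}$; for the non-elementary groups at hand the limit set is infinite, so $p \neq q$. Let $g$ be the complete geodesic of $\Hy$ with endpoints $p$ and $q$. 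Because $p,q \in \Lambda_{\veechsg}$ the geodesic $g$ is contained in $C(\Lambda_{\veechsg})$, and since the open arc $I_0$ contains no limit point, all of $\Lambda_{\veechsg}$ lies in the closed half-plane bounded by $g$ that avoids $I_0$; hence $C(\Lambda_{\veechsg})$ lies in that half-plane and $g$ is one of its boundary geodesics.

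I would then identify the stabiliser. An element of $\veechsg \subset \mathrm{PSL}_2(\R)$ acts on $S^1$ preserving orientation, so it can preserve the arc $I_0$ only by fixing $p$ and $q$ individually, a transposition of the two endpoints sending $I_0$ onto the complementary arc. Thus $\Stab_{\veechsg}(I_0)$ equals the pointwise stabiliser of $\{p,q\}$, which consists of the identity together with the hyperbolic isometries whose axis is $g$. Cyclicity is then immediate from discreteness: this pointwise stabiliser $H:=\Stab_{\veechsg}(I_0)$ injects into $(\R,+)$ by assigning to each element its signed translation length along $g$, and since $\veechsg$ is discrete, $H$ is a discrete subgroup of $\R$, hence trivial or infinite cyclic. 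In the latter case a generator $\gamma_0$ is hyperbolic with axis $g$, so its two fixed points are exactly $p,q=\partial I_0$, which is the desired conclusion.

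The main obstacle is therefore to rule out $H=\{1\}$, and this is precisely where finite generation enters. I would invoke that a finitely generated Fuchsian group is geometrically finite, so that $C(M_{\veechsg})$ has finite area and finite topological type and its boundary is a finite disjoint union of complete simple geodesics. If $H$ were trivial, then $g$ would descend to a non-compact boundary geodesic $\bar g \cong \R$ of the convex core; but a boundary geodesic of a geometrically finite surface bounds a funnel and must be a closed geodesic, a contradiction. Equivalently, I would argue directly that neither endpoint can be parabolic: a bounded parabolic fixed point of a geometrically finite group is a two-sided accumulation point of $\Lambda_{\veechsg}$ and so cannot be the endpoint of a component of $\Omega_{\veechsg}$; the only remaining possibility for an endpoint of $I_0$ is then the fixed point of a hyperbolic element with axis $g$, forcing $H \neq \{1\}$. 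Carrying out this last step rigorously — establishing geometric finiteness and the funnel structure of the ends, or equivalently the two-sided accumulation of $\Lambda_{\veechsg}$ at parabolic points — is the technical heart of the argument, while everything preceding it is formal.
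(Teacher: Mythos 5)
Your argument is correct in substance, but it runs in the opposite direction from the paper's, and the comparison is instructive. The paper starts from a closed boundary geodesic $c$ of the convex core, lifts it to $\tilde c$, observes that $\tilde c$ is the axis of a hyperbolic element, and checks that this element stabilises the component of $\Omega_{\veechsg}$ cut off by $\tilde c$; it then explicitly declines to prove the converse assignment (component $\mapsto$ boundary geodesic), merely noting that geometric finiteness and the finite-sided fundamental polygon are what make it work, and reading off the generator ($A^{-1}B$ in Figure~\ref{bout}) from the side pairings. You instead start from the component $I_0$ itself: the geodesic $g$ joining $\partial I_0$ is a boundary geodesic of $C(\Lambda_{\veechsg})$, orientation-preservation forces $\Stab_{\veechsg}(I_0)$ to fix both endpoints, and discreteness makes this stabiliser a discrete subgroup of the one-parameter hyperbolic group with axis $g$, hence trivial or infinite cyclic with hyperbolic generator. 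This is cleaner and proves the direction the lemma actually asserts, with the added bonus of identifying the full stabiliser; what the paper's route buys is the explicit generator for the concrete group $\Gamma$, which is then used in Proposition~\ref{proposition discontinuity set}. Both proofs concentrate the real work in the same place — ruling out the degenerate case via geometric finiteness — and yours is no less rigorous than the paper's there. One caution on your ``equivalent'' finish: knowing that neither endpoint of $I_0$ is a (bounded) parabolic fixed point only tells you the endpoints are conical limit points, which does not by itself produce a hyperbolic element with axis $g$; to conclude you still need either the compactness of $\partial C(M_{\veechsg})$ (your funnel argument, which is sound) or the side-pairing argument on a finite-sided fundamental domain. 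Also note, as you do in passing, that the statement requires $\veechsg$ non-elementary (it fails for a parabolic cyclic group, where $\Omega$ has a single component stabilised by a parabolic); the paper leaves this implicit as well.
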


\noindent We will keep notations introduced with Figure \ref{bout}. We start
by showing that for any choice of a lift $\tilde{c}$ in the universal
cover of a geodesic $c$ in the boundary of the convex core one can
associate an isometry verifying the properties of Lemma
$\ref{pro bord cantor}$.  Let $c$ be a closed geodesic consisting of a
connected component of the boundary of the convex core of
$M_{\veechsg}$. One can choose a lift $\tilde{c}$ of such a geodesic
in the universal cover, the geodesic $\tilde{c}$ is the axis of some
hyperbolic isometry $\Phi$, whose fixed points are precisely the
intersection of $\tilde{c}$ with the circle. As an element of the
boundary of $C(M_\veechsg)$ it cuts the surface $M_{\veechsg}$ into two
pieces: $C(M_\veechsg)$ and an end $E$. One can check that this
isometry $\Phi$ is exactly the stabiliser of the connected component
$E_0$ of $p^{-1}(E)$ whose boundary is the geodesic
$\tilde{c}$. Therefore such an isometry stabilises the connected
component of the discontinuity set given by the endpoints of the
geodesic $\tilde{c}$. We have shown that given a boundary component of
$C(M_\veechsg)$ one can associate an element (in fact a conjugacy
class) of the group $\veechsg$ which stabilises a connected component
of $\Omega_{\Gamma}$. We will not show how to associate a geodesic in
the boundary of the convex core to a connected component of the
discontinuity set.

\begin{remark}
We want to put the emphasis on the fact the
assumption that the group is finitely generated will be used here. The
key point is the geometric finiteness theorem \cite[Theorem
4.6.1]{booksvetlana} which asserts that any finitely generated group
is also \textbf{geometrically finite}.  It means that the action of
such a group admits a polygonal fundamental domain with finitely many
edges. It is not difficult to exhibit from such a fundamental domain
the desired geodesic by looking at the pairing induced by the group,
as it is done in Figure $\ref{bout}$ for our Schottky group.
\end{remark}

\begin{corollary}
  \label{cor bouts compo}
  Connected components of \ $\quotient{\Omega_{\veechsg}}{\veechsg}$
  are in one to one correspondence with infinite volume ends of the
  surface $\quotient{\mathbb{H}^2}{\veechsg}$.
\end{corollary}

\noindent We now have all the material needed to prove Proposition \ref{proposition discontinuity set}.

\begin{proof}[Proof of Proposition \ref{proposition discontinuity
    set}]

  Because the surface $\quotient{\mathbb{H}^2}{\veechsg}$ has only one
  end of infinite volume Corollary \ref{cor bouts compo} gives
  immediately that $\quotient{\Omega_\veechsg}{\veechsg}$ is a single
  circle.  Proof of the second part of Proposition \ref{proposition discontinuity set} consists in a simple matrix computation. Figure \ref{bout}
  gives explicitly the elements of the group $\veechsg$ which
  stabilise a connected component of the discontinuity set. We then
  have to prove:

$$	 \left[ AB^{-1}\begin{pmatrix} 1 \\ 1	\end{pmatrix} \right] = \left[ \begin{pmatrix} 4 \\  1
  \end{pmatrix} \right] $$ where $ [ X ]$ is the projective class of
the vector $X$. The computation is easy:

\begin{align*}
  AB^{-1}\begin{pmatrix}
    1 \\ 1
  \end{pmatrix} & = \begin{pmatrix} -8 & 6 \\ - \frac{3}{2} & 1
  \end{pmatrix} \begin{pmatrix} 1 \\ 1
  \end{pmatrix} \\
                & = \begin{pmatrix} -2 \\ - \frac{1}{2}
                \end{pmatrix} \\
\end{align*}

\end{proof}

\section{Generic directions and Rauzy induction.}
\label{secgendir}

\noindent The boundary of $\Hy$ is canonically identified with $\mathbb{RP}^1$
through the natural embedding $\Hy \rightarrow \mathbb{CP}^1$.  Recall
that the action of $\mathrm{PGL}(2,\C)$ by Möbius transformations on
$\C$ is induced by matrix multiplication on $\mathbb{CP}^1$ after
identification with $\C$ by the dilatation chart $z \rightarrow
[z:1]$. Thus the action of matrices of the Veech group on the set of
directions corresponds to
the action of these matrices as homographies on the boundary of $\Hy$.\\

\begin{figure}[h!]
  \centering
  \includegraphics[width=.5\linewidth]{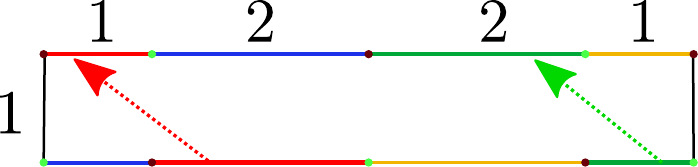}
  \caption{Attractive leaf on the left, repulsive on the right}
  \label{attractive}
\end{figure}

\noindent We notice straight away that for directions $[t:1]$ with $t$ between
$1$ and $2$, there is an obvious attractive leaf of dilatation
parameter $1/2$ (see Figure \ref{attractive}).
There is also a repulsive closed leaf in this
direction. This will always be the case since $-\mathrm{Id}$ is in the Veech
group, sending attractive closed leaves to repulsive closed leaves.

\noindent In the following we will describe dynamics of the directional
foliation for $t$ between $2$ and $4$. According to Section
\ref{subsecgroup} the interval of direction $[1,4]$ is a fundamental
domain for the action of $\veechsg$ on $\Omega_{\veechsg}$ its
discontinuity set. Moreover this discontinuity set has full Lebesgue
measure in the set of directions thus understanding the dynamical
behaviour of a typical direction therefore amounts to understanding it
for $t \in [1,4]$. Further discussion on what happens in other
directions will be done in the next section.\\

\subsection{Reduction to an AI}
\begin{figure}[h!]
  \centering
  \includegraphics[width=.5\linewidth]{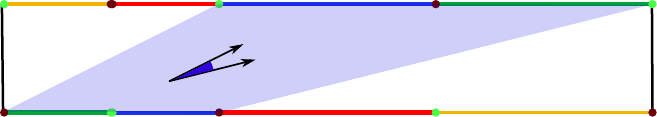}
  \caption{The stable subsurface.}
  \label{stable}
\end{figure}

The directions for $t \in [2,4]$ have an appreciable property; they
correspond to the directions of a subsurface invariant under the (oriented) foliation represented in Figure \ref{stable}. Every leaf in
the given angular set of directions that enters the subsurface will
stay trapped in it thereafter.  We therefore seek attractive closed leaves in
this subset. To do so, take a horizontal interval joining the boundary components of this invariant subsurface and consider the first return map on it. It has a specific form (close to an
affine interval exchange) which we will study in this section.\\

\newcommand{\IetFam}[2]{\mathcal I \left(#1, #2 \right)}

In the following, we use the notation AI to talk of a piecewise affine injection on an interval.
For any $m,n \in \mathbb N$, let $\IetFam m n$ be the set of AIs
defined on $[0,1]$ with two intervals on which it is affine and such
that the image of the left interval is an interval of its length
divided by $2^{n}$ which rightmost point is $1$, and that the image of
its right interval is an interval of its length divided by $2^{m}$
which leftmost point is 0 (see Figure \ref{iet} for such an AI
defined on $[0,1]$).  When representing an AI, we will color the
intervals on which it is affine in different colors, and represent a
second interval on which we color the image of each interval with the
corresponding color; this will be sufficient to characterise the
map. The geometric representation motivates the fact that we call the
former and latter sets of intervals the \textit{top} and \textit{bottom
  intervals}.

\begin{figure}[h]
  \centering
  \includegraphics[width=.5\linewidth]{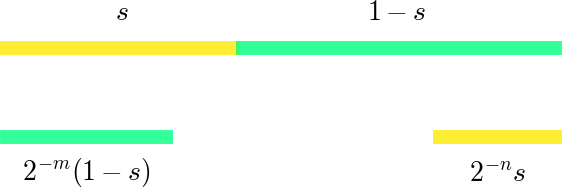}
  \caption{Geometric representation of an element of $\IetFam m n$}
  \label{iet}
\end{figure}

\noindent Note that the cross-sections defined on the subsurface of Figure \ref{stable} are in $\IetFam 1
1$. We will study the dynamical behaviour of this family of AI.\\

\subsection{Rauzy-Veech induction}

Let $T$ be an AI and $D$ be its interval of definition.  The first
return map on a subinterval $D' \subset D$, $T': D' \rightarrow D'$ is
defined for every $x \in D'$, as
$$ T'(x) = T^{n_0}(x) \text{ where } n_0 = \inf \{ n \geq 1 \ | \T^n(x) \in D' \}$$

Since we have no information on the recurrence properties of an AI this first return map is a priori not defined on an arbitrary
sub-interval. Nonetheless generalizing a wonderful algorithm of Rauzy
\cite{Rauzy} for IETs, we get a family of subinterval on which this
first return map is well defined. Associating to an AI its first return
map on this well-chosen smaller interval will be called the
Rauzy-Veech induction.

The general idea in the choice of this interval is to consider the
smallest of the top and bottom intervals at one end of $D$ (left
or right) the interval of definition.  We then consider the first
return map on $D$ minus this interval.\\

In the following we describe explicitly the induction for the simple
family $\IetFam m n$. A general and rigorous definition of Rauzy-Veech
induction in the more general context of both AIs and AIETs is certainly possible with a lot of interesting
questions emerging but is beyond the scope of this article.\\

Assume now that $T$ is an element of $\IetFam m n$, let
$A, B \subset D$ be the left and right top intervals of $T$, and
$\lambda_A, \lambda_B$ their length.
Several distinct cases can happen, \\

\begin{enumerate}
\item
  \label{continue}
  \begin{enumerate}
  \item $B \varsubsetneq T(A)$ \ \textit{i.e.} \
    $\lambda_B < 2^{-n} \lambda_A$.

  \begin{figure}[h!]
    \hspace*{1cm}
    \begin{subfigure}{.45\linewidth}
      \centering
      \includegraphics[width=.9\linewidth]{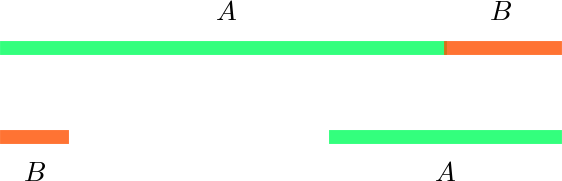}
      \subcaption{Example of such AI}
    \end{subfigure}
    \begin{subfigure}{.45\linewidth}
      \centering
      \includegraphics[width=.9\linewidth]{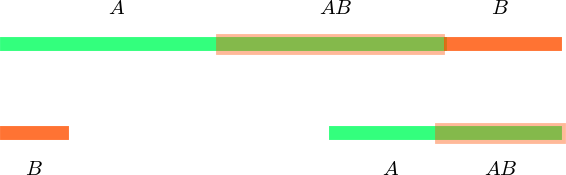}
      \subcaption{Right Rauzy-Veech induction}
    \end{subfigure}
  \end{figure}

  We consider the first return map on $D' = D - B$.  $T^{-1}(B)$ of
  length $2^n \lambda_B$ has no direct image by $T$ in $D'$ but
  $T(B) \subset D'$. Thus for the first return map, this interval will
  be sent directly to $T(B)$ dividing its length by $2^{n+m}$.  We
  call this a right Rauzy-Veech induction of our AI.  The new AI
  is in $\IetFam {m+n} {n}$, and its length vector
  $(\lambda_A', \lambda_B')$ satisfies
  $$
  \left(
    \begin{array}{c}
      \lambda_A' \\
      \lambda_B'
    \end{array}
  \right) = \overbrace{ \left(
      \begin{array}{c c}
        1 & -2^n \\
        0 &  2^n
      \end{array}
    \right) }^{R_{m,n}} \left(
    \begin{array}{c}
      \lambda_A \\
      \lambda_B
    \end{array}
  \right)
  $$
  \label{bad}

\item If $A \varsubsetneq T(B)$ \ \textit{i.e.} \
  $2^{-m} \lambda_B > \lambda_A$.

    \begin{figure}[h!]
      \hspace*{1cm}
      \begin{subfigure}{.45\linewidth}
        \centering \hspace*{.5cm}
        \includegraphics[width=.9\linewidth]{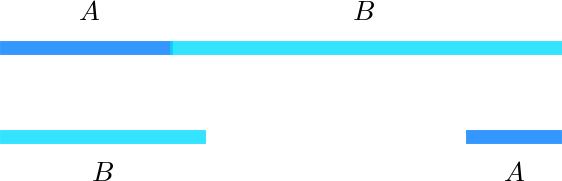}
        \subcaption{AI satisfying $A \subset T(B)$}
      \end{subfigure}
      \begin{subfigure}{.45\linewidth}
        \centering \hspace*{.5cm}
        \includegraphics[width=.9\linewidth]{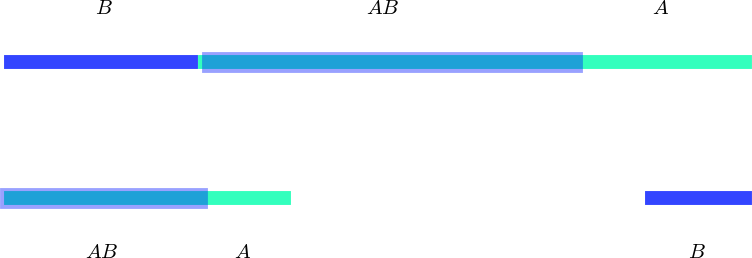}
        \subcaption{Right Rauzy-Veech induction}
      \end{subfigure}
    \end{figure}

    In this case, the right Rauzy-Veech induction is not well-defined therefore we consider
    the first return map on $D' = D-A$ which we call the left
    Rauzy-Veech induction of our AI. We obtain a new AI in
    $\IetFam {m} {n+m}$ and its length vector
    $(\lambda_A', \lambda_B')$ satisfies,
  $$
  \left(
    \begin{array}{c}
      \lambda_A' \\
      \lambda_B'
    \end{array}
  \right) = \overbrace{ \left(
      \begin{array}{c c}
        2^m & 0 \\
        -2^m & 1
      \end{array}
    \right)}^{L_{m,n}} \left(
    \begin{array}{c}
      \lambda_A \\
      \lambda_B
    \end{array}
  \right)
  $$\\

\end{enumerate}

\noindent Note that the two subcases presented above are mutually exclusive since the considered maps are strictly contracting.

\item $T(A) \subset B$ \ \textit{i.e.} \ $\lambda_B \geq 2^{-n} \lambda_A$ and $T(B) \subset A$ \ \textit{i.e.} \ $2^{-m} \lambda_B \leq \lambda_A$.\\
  \label{fixed point}

  We consider the first return map on the subinterval $D' = D - T(A)$.
  Then $A$ has no direct image by $T$ in $D'$ but
  $T^2(A) \subset T(B) \subset D'$.  Thus in the first return map,
  this interval will
  be sent directly to $T^2(A)$ dividing its length by $2^{n+m}$.\\

    \begin{figure}[h!]
      \hspace*{1cm}
      \begin{subfigure}{.45\linewidth}
        \centering \hspace*{.5cm}
        \includegraphics[width=.9\linewidth]{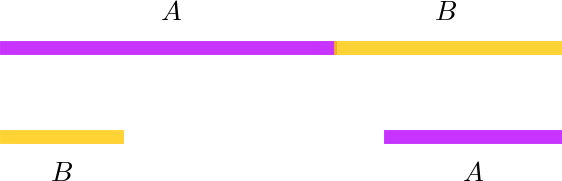}
        \subcaption{AI satisfying $T(B) \subset A$}
      \end{subfigure}
      \begin{subfigure}{.45\linewidth}
        \centering \hspace*{.5cm}
        \includegraphics[width=.9\linewidth]{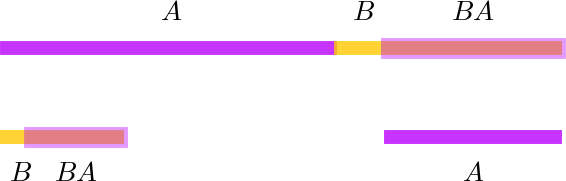}
        \subcaption{Right Rauzy-Veech induction}
      \end{subfigure}
    \end{figure}

    \noindent Then $T^2(A) \subset A$ thus the induced map has an attractive fixed
    point of
    derivative $2^{-n-m}$.\\

  \end{enumerate}

  \begin{remark}
    \label{set}
    The set of length for which we apply left or right Rauzy-Veech
    induction in the above trichotomy is exactly the set on which
    lengths $\lambda_A'$ and $\lambda_B'$ implied by the above
    formulas are both positive.\\

    \noindent More precisely,
    $0 \leq \lambda_B \leq 2^{-n} \lambda_A \iff R_{m,n} \cdot \left(
      \begin{array}{c}
        \lambda_A \\
        \lambda_B
      \end{array}
    \right) \geq 0$,\\
    and
    $0 \leq 2^{-m} \lambda_B \leq \lambda_A \iff L_{m,n} \cdot \left(
      \begin{array}{c}
        \lambda_A \\
        \lambda_B
      \end{array}
    \right) \geq 0$.\\

    \noindent This will be useful later on to describe the set of
    parameters which corresponds to the sequence of induction moves we
    apply.
  \end{remark}

  \paragraph*{\bf The algorithm.}

  We define in what follows an algorithm based on Rauzy induction
  that will allow us to determine if an element of $\IetFam 1 1$ has
  an attractive periodic orbit; and if so the length of its periodic
  orbit (or equivalently the dilatation coefficient of the
  associated leaf in $\disco$).\\

  \noindent The algorithm goes the following way:\\

  The entry is an element of $\IetFam m n$,
  \begin{enumerate}
  \item If the entry is in case $(1)$, perform in case $(a)$ the right
    Rauzy induction $R$ or in case $(b)$ the left Rauzy induction $L$
    to obtain an element of $\IetFam {m+n} {n}$ or $\IetFam {m} {n+m}$
    respectively.  Repeat the loop with this new element.\\

  \item If it is in case $(2)$, it means that the first return map on
    a well chosen interval has a periodic attractive point of
    derivative $2^{-m-n}$. The algorithm
    stops.\\

  \end{enumerate}

  Alongside the procedure comes a sequence of symbols $R$ and $L$
  keeping track of whether we have performed the Rauzy induction on
  the left or on the right at the $n^{th}$ stage. This sequence is
  finite if and only if the algorithm described above finishes. An
  interesting phenomenon will happen for AI for which the induction
  never stops, and will be described latter.

  \subsection{Directions with attractive closed leaf}
\label{subsec attractive}  
  In the directions of Figure \ref{stable} corresponding to parameters in $[2,4]$ in projective coordinates, we consider the first
  return map of the directional foliation on the interval given by the
  two length 1 horizontal interval at the bottom of the rectangle.  We
  have chosen directions such that the first return map is well
  defined although it is not bijective, and it belongs to
  $\IetFam 1 1$.  The ratio of the two top intervals' length will vary
  smoothly between $0$ and $\infty$ depending on the direction we
  choose. We parametrise this family of AI by $s \in I := [0,1]$,
  where $(s, 1-s)$ is the length vector of the element of
  $\IetFam 1 1$ we get.  The purpose of this section is to characterise the
  subspace $H \subset I$ for which the above algorithm stops, in
  particular they correspond to AI with a periodic orbit. The case
  of $I-H$ will be settled in the next subsection.\\

  \noindent We describe for any finite word in the alphabet $\{ L, R \}$,
  $w = w_1 \dots w_{l-1}$, the subset of parameters
  $H(w) \subset H \subset I$ for which the algorithm stops after the
  sequence $w$ of Rauzy-Veech induction moves.\\

  \noindent We associate to $w$ the sequences $n_1=1, \dots, n_{l}$,
  $m_1=1, \dots, m_{l}$ and $M_1=Id, \dots, M_{l}$ defined by the
  recursive properties,
$$
m_{i+1} =
\begin{cases}
  m_i       & \text{if } w_i = L\\
  n_i + m_i & \text{if } w_i = R
\end{cases}
, \hspace*{.5cm} n_{i+1} =
\begin{cases}
  n_i + m_i & \text{if } w_i = L\\
  n_i & \text{if } w_i = R
\end{cases}
,$$
$$
M_{i+1} =
\begin{cases}
  L_{m_i, n_i} \cdot M_i & \text{if } w_i = L\\
  R_{m_i, n_i} \cdot M_i & \text{if } w_i = R
\end{cases}$$

\noindent Let $s \in I$ such that we can apply Rauzy-Veech inductions
corresponding to $w$ to the element of $\IetFam 1 1$ of lengths
$(s, 1-s)$. The induced AI after all the steps of the induction is
in $\IetFam {m_{l}} {n_{l}}$ and its length vector is
$$M_{l} \cdot
\left(
  \begin{array}{c}
    s   \\
    1-s \\
  \end{array}
\right) := \left(
  \begin{array}{c c}
    a & b \\
    c & d \\
  \end{array}
\right) \cdot \left(
  \begin{array}{c}
    s   \\
    1-s \\
  \end{array}
\right) = \left(
  \begin{array}{c}
    (a-b)s + b \\
    (c-d)s + d \\
  \end{array}
\right)
$$

\noindent Following Remark \ref{set}, the property of $s$ being such that we can
apply all the Rauzy-Veech inductions corresponding to $w$ to the
initial AI in $\IetFam 1 1$ is equivalent to $(a-b)s + b \geq 0$ and
$(c-d)s + d \geq 0$.  An induction on $M_i$ shows that it is an
integer matrix with $a, d \geq 0$, $b,c \leq 0$, hence
$s \in \left[ \frac {-b}{a-b}, \frac {d}{d-c} \right] =: I(w)$.
$H(w)$ will be the central subinterval of $I(w)$ for which the
induced AI in $\IetFam {m_l} {n_l}$ is in case $(2)$.\\

\noindent Consider the sets
$$H_k := \bigcup_{\left|w\right| \leq k} H(w) \ \text{ and } \ H = \bigcup_{k} H_k.$$
Notice that $H$ has the same construction as the complement of the
Cantor triadic set; each $H_k$ is constructed from $H_{k-1}$ by adding
an interval in the interior of each interval which is a connected component of
$I - \bigcup_{j < k} H_j$.\\

\noindent The rest of the subsection aims now at proving the following lemma,

\begin{lemma}
\label{lemmecarlos}
  $H \subset I$ has full Lebesgue measure.
\end{lemma}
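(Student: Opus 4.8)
The plan is to show that the surviving ``Cantor'' set $I\setminus H$ has zero Lebesgue measure. Writing $S_k:=\sum_{|w|=k}|I(w)|$ for the total length that survives after $k$ rounds of deletion, I will prove that $S_k\to 0$ geometrically. Since up to endpoints $I(w)$ is partitioned into the left continuing interval $I(wL)$, the central stopping interval $H(w)$, and the right continuing interval $I(wR)$, one has $S_{k+1}=\sum_{|w|=k}\bigl(|I(wL)|+|I(wR)|\bigr)$. It therefore suffices to bound the surviving proportion $\bigl(|I(wL)|+|I(wR)|\bigr)/|I(w)|$ by a constant $\theta<1$ independent of $w$, and then $S_k\le\theta^k$. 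Finally $|H|=\sum_k(S_k-S_{k+1})$ telescopes to $S_0=|I|$, giving full measure.

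First I would record explicit formulas for the two proportions. With $M_l=\left(\begin{smallmatrix} a & b\\ c& d\end{smallmatrix}\right)$, set $p_w:=a-b>0$ and $q_w:=d-c>0$ (the denominators already present in $I(w)=[\frac{-b}{a-b},\frac{d}{d-c}]$). Using the closed form $|I(w)|=\det M_l/\bigl[(a-b)(d-c)\bigr]$ and the inductive definitions of $R_{m,n},L_{m,n},M_{l+1}$, a direct computation gives
\[
\frac{|I(wL)|}{|I(w)|}=\frac{1}{1+2^{m_l}\,r_w},\qquad
\frac{|I(wR)|}{|I(w)|}=\frac{1}{1+2^{n_l}/r_w},\qquad
r_w:=\frac{p_w}{q_w}.
\]
In the same coordinates the inductions act on the ratio by $r\mapsto 1+r\,2^{-n_l}$ (right move) and $r\mapsto 2^{m_l}r/(1+2^{m_l}r)$ (left move), starting from $r_\varnothing=1$.

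The crux of the argument — and the main obstacle — is that this surviving proportion is \emph{not} automatically bounded away from $1$: as $r_w\to 0$ (resp. $r_w\to\infty$) the interval $I(wR)$ (resp. $I(wL)$) fills almost all of $I(w)$. The key lemma I would isolate is therefore the a priori bound $r_w\in[\tfrac12,2]$ for every word $w$. This is an immediate induction using $m_l,n_l\ge 1$: the right move sends $[\tfrac12,2]$ into $(1,2]$ since $1+r\,2^{-n_l}\le 1+1=2$, and the left move sends $[\tfrac12,2]$ into $[\tfrac12,1)$ since $2^{m}r/(1+2^{m}r)$ is increasing in $r$ and $m$ with value $\tfrac12$ at $r=\tfrac12,\,m=1$; as $r_\varnothing=1$, the interval $[\tfrac12,2]$ is invariant.

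Granting the bound, the conclusion is elementary. Because $2^{m_l},2^{n_l}\ge 2$, the surviving proportion is at most $h(r_w)$ with $h(r)=\frac{1}{1+2r}+\frac{1}{1+2/r}$. A one-variable estimate shows $h\le\tfrac{7}{10}$ on $[\tfrac12,2]$: the unique interior critical point $r=1$ gives the minimum $\tfrac23$, while the endpoints $r=\tfrac12$ and $r=2$ both give the maximum $\tfrac{7}{10}$. Hence $S_{k+1}\le\tfrac{7}{10}\,S_k$, so $S_k\le(\tfrac{7}{10})^k\to 0$, and $|I\setminus H|=\lim_k S_k=0$, i.e. $H$ has full Lebesgue measure. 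I expect the only genuinely delicate point to be the invariance $r_w\in[\tfrac12,2]$; everything downstream is a routine contraction-and-telescoping estimate.
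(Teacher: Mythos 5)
Your proposal is correct and follows essentially the same route as the paper: the same three-interval decomposition of $I(w)$, the same key invariant $\tfrac12\le\frac{a-b}{d-c}\le 2$ proved by the same induction on the word, and the same geometric-decay conclusion. The only cosmetic difference is that you bound the surviving proportion above by $\tfrac{7}{10}$ whereas the paper bounds the deleted proportion $|H(w)|/|I(w)|$ below by $\tfrac16$; these are the same estimate read from opposite sides.
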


\noindent As a preliminary we need the following Lemma which will be used later on in the proof.

\begin{lemma}
\label{lemmeloscar}
  For any word $w$ in $\{ R, L \}$, if $M(w) = \left(
    \begin{array}{c c}
      a & b \\
      c & d \\
    \end{array}
  \right)$, we have, $$2^{-1} \leq x = \frac{a-b}{d-c} \leq 2 $$
\end{lemma}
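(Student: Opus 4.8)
The plan is to induct on the length of the word $w$, tracking not the full matrix $M(w)=M_l$ but a single scalar invariant extracted from it. The crucial observation is that $a-b$ and $d-c$ are, up to sign, the coordinates of the image of the vector $(1,-1)^T$ under $M_l$. Writing
$$M_i \begin{pmatrix} 1 \\ -1 \end{pmatrix} = \begin{pmatrix} p_i \\ q_i \end{pmatrix}, \qquad p_i = a_i - b_i, \quad q_i = c_i - d_i = -(d_i - c_i),$$
the quantity to control is the ratio $r_i := \dfrac{p_i}{-q_i} = \dfrac{a_i - b_i}{d_i - c_i}$, and the assertion of the lemma is exactly that $r_l \in [1/2, 2]$.

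First I would check that $p_i > 0$ and $q_i < 0$ throughout, so that $r_i$ is a well-defined positive number. This follows by a direct induction from $p_1 = 1$, $q_1 = -1$, using the two update rules
$$R_{m,n}\begin{pmatrix} p \\ q \end{pmatrix} = \begin{pmatrix} p + 2^n(-q) \\ -2^n(-q) \end{pmatrix}, \qquad L_{m,n}\begin{pmatrix} p \\ q \end{pmatrix} = \begin{pmatrix} 2^m p \\ -\bigl(2^m p + (-q)\bigr) \end{pmatrix},$$
each of which visibly preserves the cone $\{p > 0,\ q < 0\}$.

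Next I would compute the recursion induced on $r_i$. A short calculation from the formulas above gives, in the case $w_i = R$,
$$r_{i+1} = 2^{-n_i} r_i + 1,$$
while in the case $w_i = L$ the same computation applied to the reciprocal $u_i := 1/r_i$ yields the symmetric relation
$$u_{i+1} = 2^{-m_i} u_i + 1.$$
This reciprocal symmetry between $L$ and $R$ is the heart of the argument: since the target interval $[1/2,2]$ is invariant under $r \mapsto 1/r$, it is enough to show that each of these affine maps preserves $[1/2,2]$. Because $m_i, n_i \geq 1$ for every $i$ (immediate from their recursive definitions with $m_1 = n_1 = 1$), the contraction factor $2^{-n_i}$ (resp. $2^{-m_i}$) is at most $1/2$; hence if $r_i \in [1/2,2]$ then $2^{-n_i} r_i \in (0,1]$ and so $r_{i+1} = 2^{-n_i} r_i + 1 \in (1,2] \subset [1/2,2]$, and symmetrically for $u_i$ in the $L$ case. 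With the base case $r_1 = 1$, the induction closes and gives $r_l \in [1/2,2]$, which is the claim.

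I expect the only real obstacle to be the first, conceptual step: recognizing that the awkward combination $(a-b)/(d-c)$ is the slope of $M_l(1,-1)^T$, and that passing to the reciprocal $u = 1/r$ makes the $L$ and $R$ cases interchangeable. Once the two recursions $r \mapsto 2^{-n}r + 1$ and $u \mapsto 2^{-m}u + 1$ are in hand, the invariance of $[1/2,2]$ is a one-line estimate, and the remaining verifications — positivity of $p_i$ and $-q_i$, and the bounds $m_i, n_i \geq 1$ — are routine inductions that I would not belabor.
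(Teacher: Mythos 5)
Your proof is correct and follows essentially the same route as the paper: both arguments induct on the length of $w$ and derive the recursions $r \mapsto 2^{-n}r+1$ under $R_{m,n}$ and $1/r \mapsto 2^{-m}(1/r)+1$ under $L_{m,n}$, then observe that these maps preserve $[1/2,2]$ since $m,n\geq 1$. Your reformulation via the image of $(1,-1)^T$ and the explicit one-line invariance estimate merely make explicit what the paper leaves as ``from which the inequality follows.''
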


\begin{proof}
  The proof goes by induction on the length of $w$. Let us assume that
  $2^{-1} \leq \frac{a-b}{d-c} \leq 2$ for some $w$.  We denote by

    $$\left(
    \begin{array}{c c}
      a' & b' \\
      c' & d' \\
    \end{array}
  \right) = R_{m,n} \cdot \left(
    \begin{array}{c c}
      a & b \\
      c & d \\
    \end{array}
  \right) = \left(
    \begin{array}{c c}
      a-2^n c & b - 2^n d \\
      2^n c   & 2^n d     \\
    \end{array}
  \right)$$

  \noindent Thus $\frac{a'-b'}{d'-c'} = 2^{-n} \frac{a-b}{d-c} + 1$
  from which the inequality follows.

  $$\left(
    \begin{array}{c c}
      a' & b' \\
      c' & d' \\
    \end{array}
  \right) = L_{m,n} \cdot \left(
    \begin{array}{c c}
      a & b \\
      c & d \\
    \end{array}
  \right) = \left(
    \begin{array}{c c}
      2^m a & 2^m b \\
      c - 2^m a & d - 2^mb \\
    \end{array}
  \right)$$

  \noindent The inequality is similar to the previous one.
\end{proof}

\begin{proof}[Proof of Lemma \ref{lemmecarlos}]
  We will prove in the following that for any non-empty word $w$,
  \begin{equation}
    \label{cantorin}
    \frac {|H(w)|}{|I(w)|} \geq \delta
  \end{equation}
  for some $\delta > 0$.  Thus at each step $k$, $H_k$ is at least a
  $\delta$-proportion larger in Lebesgue measure than $H_{k-1}$.  This
  implies the Lemma because
  $$\lambda(H) \geq \lambda(H_k) \geq 1 - (1-\delta)^k
  \text{ for any } k$$

\noindent
We now show Inequality \ref{cantorin}. Let $w$ be any finite word in
the alphabet $\{ L, R \}$. For convenience we normalise the interval
$I(w)$ for such that it is $[0,1]$. We denote by
$\left(\lambda_A(s),\lambda_B(s)\right)$ the length vector of the AI
induced by the sequence $w$ of Rauzy-Veech inductions.  These two
lengths are linear functions of $s$, $\lambda_A$ is zero at the left
end of the interval and $\lambda_B$ is zero at the right end.  As a
consequence, these two functions have the form
$\lambda_A(s) = \alpha s$ and $\lambda_B(s) = \beta(1-s)$ for
$s \in [0,1]$, where $\alpha$ and $\beta$ are the maximal values of
$\lambda_A$ and $\lambda_B$ respectively equal to according to the
previous computations
$$\alpha = (a-b) \frac d {d-c} +b = \frac{ad-bd+bd-bc}{d-c} = \frac{det(M_l)}{d-c}$$
and
$$\beta = (c-d)\frac {-b}{a-b} + d = \frac{-bc+bd+da-db}{a-b} = \frac{det(M_l)}{a-b}$$
We see that
$\lambda_A(s) = 2^{-m} \lambda_B(s) \iff 2^m \alpha s = \beta (1 -s)
\iff s = \frac{\beta}{2^m \alpha + \beta}$ and similarly
$\lambda_B(s) = 2^{-n} \lambda_A(s) \iff 2^n \beta(1-s) = \alpha s
\iff s = \frac {2^n \beta}{\alpha + 2^n \beta}$.  Hence
$$\lambda_A(s) \leq 2^{-m} \lambda_B(s) \iff s \in \left[0, \frac \beta {2^m \alpha + \beta}\right]$$
and
$$\lambda_B(s) \leq 2^{-n} \lambda_A(s) \iff s \in \left[ \frac {\beta}{2^{-n}\alpha + \beta}, 1 \right]$$
thus
$$H(w) = \left[ \frac \beta {2^m \alpha + \beta}, \frac {\beta} {2^{-n}\alpha + \beta} \right].$$
If we denote by $x = \frac \alpha \beta = \frac {a-b}{d-c}$,
$$ \frac {\norm{H(w)}}{\norm{I(w)}} = \frac {1} {1 + 2^{-n}x} - \frac 1 {1 + 2^m x}$$\\

\noindent Lemma \ref{lemmeloscar} implies directly that
$$ \frac {\norm{H(w)}}{\norm{I(w)}} \geq  \frac {1} {1 + 2^{-n+1}} - \frac 1 {1 + 2^{m-1}}$$

\noindent Hence for $w$ not empty, either $n \geq 2$ or $m \geq 2$
thus either
$$\frac {\norm{H(w)}}{\norm{I(w)}} \geq \frac 2 3 - \frac 1 2 = \frac 1 6 \text{ or }
\frac {\norm{H(w)}}{\norm{I(w)}} \geq \frac 1 2 - \frac 1 3 = \frac 1 6$$

\end{proof}

\subsection{AI with infinite Rauzy-Veech induction}
We focus in this subsection on what happens for AIs on which we
apply Rauzy-Veech induction infinitely many times.  First, remark that
if we apply the induction on the same side infinitely many times, the
length of the top interval of the corresponding side on the induced
AI is multiplied each time by a positive power of $2$, therefore it
goes to infinity. Yet the total length of the subinterval is bounded
by $1$ the length of the definition interval from which we started the
induction. Thus the length of the interval has to be zero; this
corresponds to the case where there is an attractive saddle connection, considered as a part of the cases treated above since we chose to take $H(w)$
closed. \\

In consequence, for an AI $T$ with parameter in $I - H$, we apply
Rauzy-Veech induction infinitely many times, and the sequence of
inductions we apply is not constant after a finite number of
steps. Now let as above $D$ be the interval of definition of the given
AI, and $A, B \subset D$ be its two intervals of
continuity. Remark that the induction keeps the right end of $T(B)$
and the left end of $T(A)$ unchanged. Moreover the induction divides
the length of one of the bottom interval (depending on which
Rauzy-Veech induction we apply) by at least two because we iterate maps whose dilatation factor is at most $\frac{1}{2}$. In turn if
we consider $I_n$ to be the open subinterval of $D$ on which we
consider the first return map after the $n$-th induction, the limit of
these nested intervals is
$$I_\infty := \bigcap_{n=1}^\infty I_n = D- T(A) \cup T(B)$$

\begin{figure}[h]
  \centering
  \includegraphics[width=.5\linewidth]{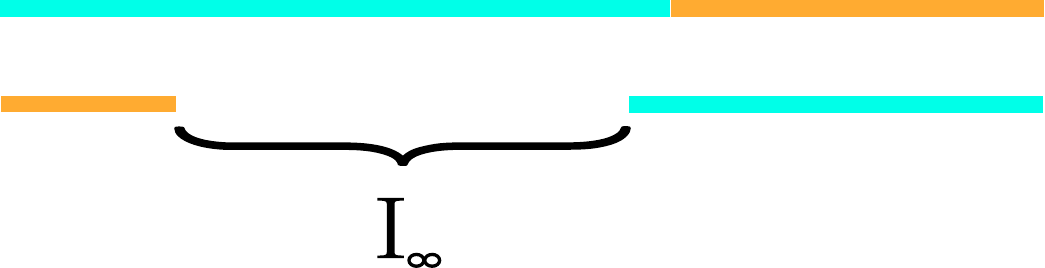}
\end{figure}

\noindent By definition, this interval is disjoint from $T(D)$, and
therefore
$$\forall x \in D \text{ and } n \in \mathbb N^*, \ T^n(x) \notin I_\infty$$
Moreover, our definition of Rauzy-Veech induction implies that any
point outside of the subinterval on which we consider the first return
will end up in this subinterval in finite time. Thus
$$\forall x \in D \text{ and } n \in \mathbb N, \exists k \in \mathbb N \text{ such that } T^k(x) \in I_n$$
Since we consider here only Rauzy-Veech induction procedure containing both infinitely right and left steps, we have that the orbit of any point of $D$ accumulates on
$\partial I_\infty$.\\

\noindent Let $\Omega$ be the complement of all the images of
$I_\infty$ namely
$$\Omega := D - \bigcup_{n=0}^\infty  T^n I_\infty$$
The measure of $I_\infty$ is $1/2$, taking the image by $T$ divides
the measure of any interval by two. Moreover, any two iterated images of this set
are disjoint since $I_\infty$ is disjoint of the image set of the injective application $T$, hence the measure of $\Omega$ is
$1/2 \cdot (1 + 1/2 + 1/2^2 + \dots) = 1$.  As we remarked, the orbit
of any point of $D$ accumulates to $\partial I_\infty$ and thus to any
image of it, hence to any point of $\partial \Omega$. As
$\bigcup_{n=0}^\infty T^n I_\infty$ has full measure, $\Omega$ has
zero Lebesgue measure and thus has empty interior. To conclude,
$\Omega$ is the limit set of any orbit of $T$.\\

Now $\Omega$ is closed set with empty interior. Moreover, if we take a
point in $\Omega$, any of its neighborhood contains some image of the interval $I_\infty$ since $\Omega$ has zero measure, and thus its
boundary. Hence no point is isolated, and $\Omega$ is a Cantor
set. Which leads to the following proposition,

\begin{proposition}
  \label{dynamics_desc}
  In the space of directions $[1,4]$ there is a set $\mathcal{H}$ (which is the union of the set $H$ constructed in this section union $]1,2[$) whose
  complement $A \cup B$ is a Cantor set of zero measure which satisfies

  \begin{itemize}
  \item $\forall \theta \in \mathcal{H}$ the foliation
    $\mathcal F_\theta$ is attracted by an attracting leaf;
    \item $A$ is countable, and $\forall \theta \in A$ the foliation
    $\mathcal F_\theta$ is attracted by a saddle connexion;
  \item $B$ is not countable, and $\forall \theta \in B$ the foliation
    $\mathcal F_\theta$ concentrates on a stable Cantor set.
  \end{itemize}
\end{proposition}

\section{Topological type of the elements of the Veech group}

\subsection{Thurston's theorem on multi-twists.}
We recall in this subsection a theorem of Thurston allowing the understanding of the topological type of the elements of a subgroup of $\MCG$ generated by a couple of multi-twists. Let $\alpha$ and $\beta$ be two multi-curve on $\Sigma$. We say that

\begin{itemize}
\item $\alpha$ and $\beta$ are \textit{tight} if they intersect transversally and if their intersection number is minimal in their isotopy class;
\item $\alpha$ and $\beta$ fill up $\Sigma$ if $\Sigma \setminus (\alpha \cup \beta)$ is a union of cells.
\end{itemize}
Denote by $\alpha_1, \cdots, \alpha_k$ and $\beta_1, \cdots, \beta_l$ the components of $\alpha$ and $\beta$ respectively. We form the $k \times l$ matrix $N = (i(\alpha_i, \beta_j))_{1\leq i \leq k, \ 1 \leq j \leq l}$. One easily checks that $\alpha \cup \beta$ is connected if and only if a power of $N^tN$ is positive. Under this assumption, $N^tN$ has a unique positive eigenvector $V$ of eigenvalue $\mu > 0$. We also denote by $T_{\alpha}$ (resp. $T_{\beta}$) the Dehn twist along $\alpha$ (resp. along $\beta$).

\begin{theorem}[Theorem 7 of \cite{Thurston}]

Let $\alpha$ and $\beta$ two multi-curves which are tight and which fill up $\Sigma$, and assume that $\alpha \cup \beta$ is connected. Denote by $G(\alpha, \beta)$ the subgroup of $\MCG$ generated by $T_{\alpha}$ and $T_{\beta}$. There is a representation $\rho : G(\alpha, \beta) \longrightarrow \mathrm{PSL}(2,\R)$ defined by

$$ \rho(T_{\alpha} ) = \begin{pmatrix}
1 & \mu^{\frac{1}{2}} \\
0 & 1
\end{pmatrix} \ \text{and} \ \rho(T_{\beta} ) = \begin{pmatrix}
1 & 0\\
-\mu^{\frac{1}{2}} & 1
\end{pmatrix} $$ such that $g$ is of finite order, reducible or pseudo-Anosov according to whether $\rho(g)$ is elliptic, parabolic or pseudo-Anosov.

\end{theorem}

\subsection{The case of $\disco$.}

We want to use Thurston's theorem to prove the

\begin{theorem}
  \label{classification}
  For all $f \in \mathrm{Affine}(\disco)$, $f$ is of finite order,
  reducible or pseudo-Anosov according to whether its image by the
  Fuchsian representation in $\mathrm{SL}(2,\R)$ is elliptic,
  parabolic or hyperbolic.
\end{theorem}

In Subsection \ref{subsecgroup}, we exhibited two elements of the Veech's groupe $\V$, \ $\begin{pmatrix}
  1 & 6\\
  0 & 1
\end{pmatrix}$ and $\begin{pmatrix}
  1 & 0\\
  \frac{3}{2}& 1
\end{pmatrix}, $
as the images by the Fuchsian representation $\rho_1$ corresponding to the Dehn twists along the curves $\alpha$ and
$\beta$ drawn in Figure \ref{cylinder1} :\\

\begin{figure}[h]
  \begin{subfigure}{.49\linewidth}
    \vspace{.5cm}
    \hspace{-.5cm}
    \includegraphics[scale=.6]{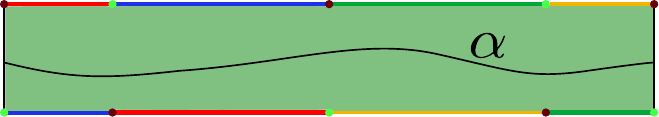}
    \subcaption{The curve $\alpha$.}
  \end{subfigure}
  \begin{subfigure}{.49\linewidth}
    \hspace{.5cm}
    \includegraphics[scale=.6]{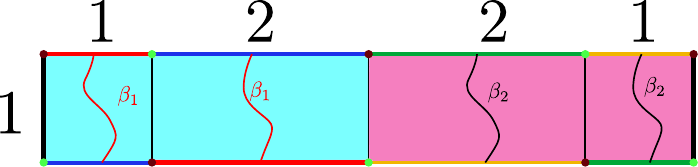}
    \caption{The multi-curve $\beta = \beta_1 \cup \beta_2$.}
  \end{subfigure}
  \caption{Definition of $\alpha$ and $\beta$}
  \label{cylinder1}
\end{figure}

\noindent One checks that:

\begin{itemize}

\item $\alpha \cup \beta$ is connected;

\item $\alpha$ and $\beta$ are tight since they can both be realized
  as geodesics of $\disco$;

\item $\alpha$ and $\beta$ are filling up $\disco$.

\end{itemize}

\noindent With an appropriate choice of orientation for $\alpha$ and
$\beta$, we have that $i(\alpha, \beta_1) = i(\alpha, \beta_2) =
2$. The intersection matrix associated is therefore
$N =\begin{pmatrix} 2 & 2
\end{pmatrix}$ and $N^tN = \begin{pmatrix} 8
\end{pmatrix}$. The parameter $\mu$ is then equal to $8$ and
$\sqrt{\mu}= 2\sqrt{2}$. We are left with two representations
$$ \rho_1, \rho_2 : G(\alpha, \beta) \longrightarrow \mathrm{PSL}(2, \R). $$

\begin{enumerate}

\item $\rho_1$ is the restriction of the Fuchsian representation to
  $G(\alpha, \beta) < \mathrm{Affine}(\disco)$ composed with the
  projection onto $\mathrm{PSL}(2, \R)$.

\item $\rho_2$ is the representation given by Thurston's theorem.

\end{enumerate}

By definition of these two representations, $\rho_1$ maps $T_{\alpha}$
to $\begin{pmatrix}
  1 & 6 \\
  0 & 1
\end{pmatrix}$ and $\rho_2$ maps it to $\begin{pmatrix}
  1 & 2\sqrt{2} \\
  0 & 1
\end{pmatrix}$; and $\rho_1$ maps $T_{\beta}$ to $\begin{pmatrix}
  1 & 0 \\
  \frac{3}{2} & 1
\end{pmatrix}$ and $\rho_2$ maps it to $\begin{pmatrix}
  1 & 0 \\
  -2\sqrt{2} & 1
\end{pmatrix}$.

\begin{proposition}
  \label{comparison}
  For all $g \in G(\alpha, \beta)$, $\rho_1(g)$ and $\rho_2(g)$ have
  same type.
\end{proposition}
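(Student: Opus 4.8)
The plan is to use that the type of a nontrivial element $M\in\mathrm{PSL}(2,\R)$ (elliptic, parabolic, hyperbolic) is detected by $|\Tr M|$ being $<2$, $=2$ or $>2$, and to show that for both $\rho_1$ and $\rho_2$ this trichotomy is controlled by one and the same combinatorial condition on a word representing $g$. As a preliminary I would note that $\rho_1$ and $\rho_2$ send the two generators to parabolics whose off-diagonal entries have products $6\cdot\frac{3}{2}=9$ and $(2\sqrt2)\cdot(-2\sqrt2)=-8$; these differ in sign, so a naive term-by-term comparison of the trace polynomials in the generators is delicate, and it is precisely this difficulty that the structural argument below is designed to bypass.

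First I would observe that $G(\alpha,\beta)$ is free of rank two: indeed $\rho_1$ maps it onto $\veechsg$, which is free of rank two by the ping-pong Lemma (Proposition \ref{prosch}), and $G(\alpha,\beta)$ is two-generated, so $\rho_1$ is an isomorphism. Next, both images $\rho_1(G(\alpha,\beta))=\veechsg$ and $\rho_2(G(\alpha,\beta))$ are generated by two parabolics of the shape $\left(\begin{smallmatrix}1&c\\0&1\end{smallmatrix}\right)$ and $\left(\begin{smallmatrix}1&0\\d&1\end{smallmatrix}\right)$ with $|cd|\geq 4$ (namely $9$ and $8$). The classical ping-pong estimate then shows that each is a \emph{discrete} free group of rank two; in particular each is torsion-free, so neither image contains an elliptic element. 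Consequently, for every nontrivial $g\in G(\alpha,\beta)$, the matrices $\rho_1(g)$ and $\rho_2(g)$ are either both parabolic or both hyperbolic, and the whole statement reduces to the claim that $\rho_1(g)$ is parabolic if and only if $\rho_2(g)$ is.

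To settle this I would characterise the parabolic elements group-theoretically. Each of the two groups is geometrically finite (as already used in Section \ref{subsec the limit set }, via \cite{booksvetlana}) with exactly two cusps, represented by the cyclic subgroups generated by the images of $T_\alpha$ and $T_\beta$: for $\veechsg$ this is the content of its Schottky description (two cusps and a single infinite-volume end), and the $\rho_2$-image has the same combinatorial type. For such a group a standard fact gives that an element is parabolic if and only if it fixes a parabolic fixed point, hence lies in a maximal parabolic subgroup, and that every maximal parabolic subgroup is conjugate to $\langle T_\alpha\rangle$ or $\langle T_\beta\rangle$ (these being infinite cyclic because the ambient group is free). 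Thus, in \emph{either} representation, $\rho_i(g)$ is parabolic if and only if $g$ is conjugate in $G(\alpha,\beta)$ to a nonzero power of $T_\alpha$ or of $T_\beta$; this is a condition on $g$ alone, independent of $i$. Combined with the absence of elliptics, this yields that $\rho_1(g)$ and $\rho_2(g)$ always have the same type.

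The main obstacle is the middle claim that the two groups share the \emph{same} cusp structure, i.e. that the $\rho_2$-image also has precisely the two cusps $\langle T_\alpha\rangle$, $\langle T_\beta\rangle$ and no accidental parabolics; once this is granted, the parabolicity condition is manifestly the same combinatorial condition for both representations. This is exactly where the inequality $|cd|\geq 4$ is essential, and where I would lean on geometric finiteness to identify maximal parabolic subgroups with cusps.
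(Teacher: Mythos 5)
Your argument is correct and follows essentially the same route as the paper's proof: both establish that $\rho_1$ and $\rho_2$ give faithful, discrete, free (Schottky-type) images generated by two parabolics, so that the quotients are spheres with two cusps and a funnel, whence no elliptics occur and an element is parabolic precisely when it is conjugate to a power of $T_\alpha$ or $T_\beta$ --- a condition on $g$ alone. Your version merely makes explicit some points the paper leaves implicit (the $|cd|\geq 4$ ping-pong criterion for discreteness and the identification of maximal parabolic subgroups via geometric finiteness), which is a welcome precision rather than a different approach.
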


\begin{proof} \

  \begin{itemize}

  \item $\rho_1$ and $\rho_2$ are faithful;

  \item $\rho_1$ and $\rho_2$ are Schottky subgroups of
    $\mathrm{PSL}(2,\R)$ of infinite covolume;

  \item $\rho_1$ and $\rho_2$ send $T_{\alpha}$ and $T_{\beta}$ to two
    parabolic elements;

  \end{itemize}

  \noindent As a consequence of these three facts, the quotient of
  $\mathbb{H}$ by the respective actions of $G(\alpha, \beta)$ through
  $\rho_1$ and $\rho_2$ respectively are both a sphere $S$ with two
  cusps and a funnel. No element of $\rho_1(G(\alpha, \beta))$ or
  $\rho_2(G(\alpha, \beta))$ is elliptic, and the image of
  $g \in G(\alpha, \beta)$ is parabolic in $\rho_1(G(\alpha, \beta))$
  or $\rho_2(G(\alpha, \beta))$ if and only if the corresponding
  element in $\pi_1(S)$ is in the free homotopy class of a simple
  closed curve circling a cusp. Which proves the proposition.
\end{proof}

There is little needed to complete the topological description of the
elements of the Veech group of $\disco$. Indeed, Proposition
\ref{comparison} above together with Thurston's theorem ensures that
the topological type of
$g \in G(\alpha, \beta) \subset \mathrm{Affine}(\disco)$ is determined
by (the projection to $\mathrm{PSL}(2,\R)$ of) its image by the
Fuchsian representation (namely $g$ has finite order if $\rho_1(g)$ is
elliptic, $g$ is reducible if $\rho_1(g)$ is parabolic and
pseudo-Anosov if $\rho_1(g)$ is hyperbolic).

\noindent The group $G(\alpha, \beta)$ has index $2$ in
$\discoveech$. The involution $i \in \mathrm{Affine}(\disco)$
acting as $\begin{pmatrix}
  -1 & 0 \\
  0 & -1
\end{pmatrix}$ preserves the multi-curves $\alpha$ and $\beta$ and
therefore commutes to the whole $G(\alpha, \beta)$. In particular, any
element of $\discoveech$ writes $g \cdot i$ with
$g \in G(\alpha, \beta)$. The type of $g \cdot i$ is the same as
the type of $g$ and this completes the classification.

\section{The global picture.}
\label{secglopic}

\noindent Gathering all materials developed in the previous sections, we prove
here the main theorems announced in the introduction.

\begin{proposition}
	\label{prop dyn tri}
  Assume that the foliation $\mathcal{F}_{\theta}$ of $\disco$ has a
  closed attracting leaf $F^+$. Then it has a unique repulsing leaf
  $F^-$ and any leaf which is different from $F^-$ and regular
  accumulates on $F^+$.
\end{proposition}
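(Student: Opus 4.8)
The plan is to split the statement into two parts: an existence part (the repulsing leaf $F^-$), which follows almost for free from the symmetry $-\mathrm{Id}$, and a global-dynamics part (uniqueness of $F^-$ together with the accumulation of every other regular leaf on $F^+$), which I would obtain by reducing to the fundamental interval and reading off the Rauzy--Veech description of Section~\ref{secgendir}.

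First I would produce the repulsing leaf. The element $-\mathrm{Id}\in\discoveech$ is realised by an affine involution $\iota$ of $\disco$; since $[-\mathrm{Id}]$ acts trivially on $\mathbb{RP}^1$, the diffeomorphism $\iota$ preserves the foliation $\mathcal{F}_\theta$ but reverses the orientation of its leaves, hence exchanges attracting and repelling closed leaves. Thus $F^-:=\iota(F^+)$ is a closed repulsing leaf, and the whole problem becomes showing that $F^-$ is the \emph{only} repulsing leaf and the only orbit that fails to accumulate on $F^+$.

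Next I would reduce to a direction lying in the fundamental domain. Possessing a closed attracting leaf is an open condition on $\theta$: the corresponding periodic point of the first return map on a transversal is transversally contracting, hence hyperbolic, so it persists for nearby directions. Since $\Lambda_{\veechsg}$ is a Cantor set with empty interior (Proposition~\ref{prosch}), the direction $\theta$ must belong to the discontinuity set $\Omega_{\veechsg}$, so some $\gamma\in\discoveech$ carries $\theta$ into the fundamental interval $I=[1,4]$. As elements of $\discoveech$ conjugate directional foliations $C^\infty$, it suffices to prove the proposition for $\theta\in I$, where the dynamics was described completely in Section~\ref{secgendir}. For such $\theta$ the return map on the stable sub-surface lies in $\IetFam 1 1$, and the hypothesis that $\mathcal{F}_\theta$ has a closed attracting leaf means precisely that the induction algorithm halts in case~$(2)$, producing a single attracting periodic orbit of the induced map. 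By the construction of the nested intervals $I_n$, every point of the section eventually enters the subinterval on which the induced map acts and is then funneled to that periodic orbit; hence every leaf meeting the stable sub-surface accumulates on $F^+$. The leaves that never enter it are governed in backward time by the $\iota$-image of this picture, which funnels them to $F^-$, and this simultaneously forces uniqueness of $F^-$ (a second repulsing leaf would yield, via $\iota$, a second attracting periodic orbit, contradicting uniqueness in case~$(2)$).

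The hard part will be the passage from the one-dimensional statement about the first return map to the genuinely global statement on $\disco$: controlling the leaves that do not a priori meet the chosen transversal or enter the stable sub-surface, and checking that the unique orbit escaping the basin of $F^+$ is exactly $F^-$. I expect to handle this by combining the trapping property of the stable sub-surface (any leaf that enters it stays and converges to $F^+$) with the $-\mathrm{Id}$-symmetry exchanging the stable and unstable sub-surfaces, so that the complement of the basin of $F^+$ collapses to the single backward-attracting orbit $F^-$; the bound of at most $3g-3$ cylinders per direction from Proposition~\ref{nb cylinders} can serve as an a~priori finiteness input should extra closed leaves need to be excluded directly.
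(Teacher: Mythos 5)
There is a genuine gap, and it sits exactly where you locate ``the hard part.'' First, your reduction to the fundamental interval does not work as stated: from the fact that having a closed attracting leaf is an open condition and that $\Lambda_{\veechsg}$ has empty interior you cannot conclude $\theta\in\Omega_{\veechsg}$ --- an open set can perfectly well meet a Cantor set (think of an open interval meeting the middle-thirds set). Nothing proved so far excludes a closed attracting leaf in a direction of $\Lambda_{\veechsg}$, so your argument silently restricts the proposition to $\Omega_{\veechsg}$, whereas the statement (and its later use, e.g.\ in step (1) of the proof of Theorem \ref{veechgroup1}) requires it for an arbitrary $\theta$. Second, even for $\theta$ in the fundamental interval, the analysis of Section \ref{secgendir} only controls leaves that meet the chosen cross-section of the stable sub-surface; the whole point of this proposition is to rule out extra invariant sets (a second attracting leaf, or a quasiminimal) living entirely outside that sub-surface. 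Your appeal to ``uniqueness in case $(2)$'' does not do this, because a second attracting closed leaf need not meet your transversal at all, and the heuristic that leaves avoiding the stable sub-surface are ``funneled to $F^-$ in backward time'' is precisely the assertion to be proved, not an argument for it.

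The paper closes this gap by a direct topological argument that never reduces to the fundamental interval. Take a regular leaf, reverse its orientation, and look at its $\omega$-limit $Q$; one must show $Q=F^-$. If $Q$ were another closed (attracting, for the reversed orientation) leaf, applying the involution realising $-\mathrm{Id}$ would produce four distinct closed leaves in direction $\theta$; since $\disco$ has no cylinder of angle $\geq\pi$ (it is triangulable), these lie in four distinct cylinders, contradicting the bound $3g-3=3$ of Proposition \ref{nb cylinders}. If $Q$ were a quasiminimal that is not a closed leaf, a result of Levitt forces any subsurface containing it to have genus at least one, which is incompatible with $Q$ being disjoint from $F^+$ and $F^-$ inside a genus-two surface. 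If you want to salvage your route, you would need to (i) prove separately that a direction with a closed attracting leaf lies in $\Omega_{\veechsg}$, and (ii) supply an independent argument bounding the invariant sets disjoint from the stable sub-surface --- at which point you would essentially be reproducing the cylinder-count and quasiminimal arguments above.
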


\noindent This proposition ensures that in all the cases where we have already
found an attracting leaf, the dynamics of the foliation is as simple
as can be.

\begin{proof} The proof consists in exhibiting the symmetry given by the element $- \mathrm{Id}$ of the Veech group in this case. If the foliation given by the direction $\theta$ has an attractive hyperbolic leaf then, from the dynamical study performed in previous sections, one can suppose, up to applying an element of the Veech group, that such a direction lies in $\mathcal{H}$. Therefore, any point $p \notin F^+$ in the stable subsurface (see Figure \ref{stable}) has to satisfy both the following:

\begin{itemize}
\item in positive time, the trajectory remains in the stable subsurface and is attracted to the closed leaf $F^+$;
\item in negative time, it escapes the stable subsurface at some point.
\end{itemize}
The first item is basically what we proved in Subsection \ref{subsec attractive}. The second one is proved by arguing on the $\omega$-limit of a point $p$, but for the foliation endowed with the reverse orientation. This set being invariant in both positive and negative time it has to be the closed leaf $F^+$ since it is the unique invariant set in the stable surface, see Subsection \ref{subsec attractive}. This implies $p \in F^+$, which concludes. \\

Therefore, in negative time, the point must visit the other stable subsurface corresponding to the colors red and yellow of Figure \ref{stable}, since the disco surface is the disjoint union of both these subsurfaces. This subsurface is stable for negative times, the point is therefore trapped in it. One can finally check that the dynamical study performed in Section \ref{subsec attractive} applies in the same way in this context, exchanging only the colors blue and green for red and yellow. In negative time, the point $p$ is then going to be attracted to the yellow red analogous leaf $F^-$, which is repulsive if seen as a leaf of the oriented foliation.
\end{proof}

We say that a direction having such a dynamical behavior is
\textit{dynamically trivial}.

\begin{corollary}
  The directions in $\Lambda_{\veechsg}$ are not dynamically trivial.
\end{corollary}

\begin{proof}
  The set of directions fixed by a hyperbolic element of
  $\Gamma$ is dense in $\Lambda_{\Gamma}$. Such a
  direction cannot be dynamically trivial, for otherwise the associated
  collection of closed leaves would be globally fixed by the corresponding
  element of $\mathrm{Affine}(\disco)$, which can not occur since, according to Theorem
  \ref{classification}, such an element is a pseudo-Anosov diffeomorphism. On the other hand, Proposition  \ref{prop dyn tri} shows that the set of dynamically trivial directions is the same as the set of directions admitting an attractive leaf, in particular both sets are open since the last one is. We conclude using the density in $\Lambda_{\Gamma}$ of the set of directions being not dynamically trivial: fixed points of hyperbolic matrices of $\Gamma$.
\end{proof}

\begin{theorem}
  The set of dynamically trivial directions in $S^1$ is open and has
  full measure.
\end{theorem}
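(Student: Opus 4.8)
The plan is to assemble the pieces already established into a covering argument on the circle of directions $S^1$. The core reduction is that the Veech group $\veechsg$ acts on $\mathbb{RP}^1$ with a limit set $\Lambda_{\veechsg}$ of zero Lebesgue measure (by Ahlfors, Theorem \ref{thmahl}, since the limit set is a Cantor set and hence not all of $S^1$) and a discontinuity set $\Omega_{\veechsg}$ of full measure. By Proposition \ref{proposition discontinuity set}, the interval $I = [\arctan(\tfrac14), \arctan(1)]$ is a fundamental domain for the $\veechsg$-action on $\Omega_{\veechsg}$. Since affine diffeomorphisms conjugate directional foliations in $\SL$-equivalent directions and preserve their dynamical type, it suffices to prove the full-measure and openness statements inside $I$, and then push forward by $\veechsg$.

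First I would establish the full-measure claim. Inside $I$ the analysis of the previous section shows that the set $H$ of parameters for which the Rauzy--Veech algorithm terminates --- equivalently the directions whose foliation concentrates to an attracting closed leaf --- has full Lebesgue measure in $I$ (the Lemma asserting $\lambda(H)=1$). By the preceding Proposition, each $\theta \in \overline{H}$ with an attracting closed leaf is dynamically trivial: there is a unique repulsing leaf and every other regular leaf accumulates on $F^+$. Transporting $H$ by all elements of $\veechsg$ yields a full-measure subset of $\Omega_{\veechsg}$ of dynamically trivial directions; since $\Omega_{\veechsg}$ itself has full measure in $S^1$ and $\Lambda_{\veechsg}$ is Lebesgue-null, the set of dynamically trivial directions has full measure in $S^1$.

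Next I would prove openness. The defining conditions of dynamical triviality are robust: having an attracting closed leaf of dilatation parameter $<1$ means the first-return map on a transversal has a hyperbolic attracting fixed point, and the existence and hyperbolicity of such a fixed point are stable under small perturbations of the direction $\theta$ (the derivative $(F_t^p)'(x^+)$ varies continuously and stays $<1$). Concretely, each $H(w)$ is an open subinterval of $I$ on which the induced AIET lands in case $(2)$ of the trichotomy with a strictly attracting periodic orbit, so $H = \bigcup_w H(w)$ is open; the same stability holds after applying the affine conjugations realizing $\veechsg$, and $\veechsg$ acts on $\Omega_{\veechsg}$ by homeomorphisms. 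Hence the set of dynamically trivial directions is a union of open sets, therefore open.

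The main obstacle I anticipate is the careful bookkeeping at the boundary $\partial H$ and at the limit set $\Lambda_{\veechsg}$: these are precisely the directions excluded from the open full-measure set, and one must verify that openness is not spoiled by directions where the foliation concentrates to a closed saddle connection (the $\theta \in \partial H$ case) or to a transverse Cantor set (the $\theta \in I \setminus \overline{H}$ case). Since both exceptional families are contained in a Lebesgue-null Cantor set and consist of non-dynamically-trivial directions, they lie in the complement of our open set, so they neither affect the measure nor obstruct openness; the delicate point is simply to confirm that the attracting-leaf condition is genuinely open rather than merely generic, which follows from the hyperbolicity (strict inequality $(F_t^p)'(x^+) < 1$) established in the trichotomy.
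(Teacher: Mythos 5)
Your proof is correct and follows essentially the same route as the paper: reduce to the fundamental interval via the $\veechsg$-action on $\Omega_{\veechsg}$, use the full measure of $H$ there together with the Lebesgue-nullity of $\Lambda_{\veechsg}$ (Ahlfors), and spread the conclusion around by the conjugating affine diffeomorphisms. The only divergence is in the openness step, where the paper simply cites Liousse's $\mathcal{C}^{\infty}$-stability of dynamically trivial foliations while you rederive it from the hyperbolicity of the attracting orbit on the intervals $H(w)$ --- a valid alternative, with the small caveat that the paper takes each $H(w)$ closed (its endpoints being saddle-connection directions, which are not dynamically trivial), so the open set you exhibit is really $\bigcup_w \mathrm{int}\, H(w)$ transported by $\veechsg$.
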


\begin{proof}
  Recall that the definition of $\veechsg$ and $\V$ are given
  in Sections \ref{subsecgroup} and \ref{subsecgroup2}.
  Since $-\mathrm{Id}$ belongs to the Veech group of $\disco$, the foliations
  $\mathcal{F}_{\theta}$ and $\mathcal{F}_{-\theta}$ have the same
  dynamical behaviour. We will therefore consider parameters $\theta$
  in $\mathbb{RP}^1$ instead of in $S^1$. We denote then by
  $\mathcal{T} \subset \mathbb{RP}^1$ the set of dynamically trivial
  directions in $\mathcal{T}$. We have proved in Section
  \ref{secgendir} that the intersection of $\mathcal{T}$ and
  $J = \{ [1: t] \ | \ t \in [1,4] \} \subset \mathbb{RP}^1$ is the
  complement of a Cantor set and that $\mathcal{T} \cap J$ has full
  measure.

 \noindent  Also $J$ is a fundamental domain (see Proposition \ref{fundomain}) for the action of $\veechsg$ on
  $\Omega_{\veechsg}$ the discontinuity set of $\veechsg$.  Since
  $\veechsg < \V$, two directions in $\mathbb{RP}^1$ in the same orbit
  for the action of $\theta$ induce conjugated foliations on $\Sigma$
  and therefore have same dynamical behaviour. This implies that
  $\mathcal{T} \cap \Omega_{\veechsg}$ is open and has full measure in
  $\Omega_{\veechsg}$. Since $\Omega_{\veechsg}$ has itself full
  measure in $\mathbb{RP}^1$, $\mathcal{T}$ has full measure in
  $\mathbb{RP}^1$. The fact that it is open is a consequence of
  the stability of dynamically trivial foliations for the $\mathcal{C}^{\infty}$
  topology, see \cite{Liousse} for instance.
\end{proof}

\noindent Relying on a similar argument exploiting in a
straightforward manner the action of the Veech group and the depiction
of the dynamics made in Section \ref{secgendir}, we get

\begin{theorem}
There exists a Cantor set $\mathcal{K} \subset S^1$ such that for
all $\theta \in \mathcal{K} $, the foliation $\mathcal{F}_{\theta}$
accumulates to a set which is locally the product of a Cantor set
with an interval. Such a set always have zero Lebesgue measure.
\end{theorem}

We believe it is worth pointing out that the method we used to find
these \textit{'Cantor like'} directions is essentially different
compared to the one used in \cite{CamelierGutierrez}, \cite{BressaudHubertMaass}
and \cite{MarmiMoussaYoccoz}. Indeed they are proper attracting set in the
sense that they have an open neighbourhood in $\Sigma$ which is pushed
by the flow strictly within itself after a certain time.

\noindent These results allows us to give a complete description of $\veechsg$.

\begin{theorem}
  \label{veechgroup1}
  The Veech group of $\disco$ is exactly $\veechsg$.
\end{theorem}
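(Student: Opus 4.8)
The plan is to prove the reverse inclusion $\discoveech\subseteq\veechsg$, the inclusion $\veechsg\subseteq\discoveech$ being already established. Since $-\mathrm{Id}$ is realised in $\mathrm{Affine}(\disco)$ and lies in the kernel of the Fuchsian representation, it is enough to work in $\mathrm{PSL}_2(\mathbb{R})$ and to show that the projected Veech group $\pi(\discoveech)$ is not strictly larger than $\veechsg=\langle A,B\rangle$. The mechanism throughout is that every element of $\mathrm{Affine}(\disco)$ acts on the circle $\mathbb{RP}^1$ of directions and carries $\mathcal{F}_\theta$ to $\mathcal{F}_{g\theta}$ by a $\mathcal{C}^\infty$-conjugacy; hence $\pi(\discoveech)$ is a Fuchsian group containing $\veechsg$ whose action on $\mathbb{RP}^1$ must preserve every conjugacy-invariant feature of the directional dynamics described in Section \ref{secgendir}. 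Discreteness of $\pi(\discoveech)$ is what makes this a genuine Fuchsian overgroup of $\veechsg$; I would either invoke it from the structural discussion of $\disco$, or deduce it from the fact, proved in Section \ref{secgendir}, that the dynamical invariants separate the $\veechsg$-orbits in the fundamental interval, so that an accumulating sequence of distinct automorphisms is impossible.

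First I would show that the two groups have the same limit set, $\Lambda_{\pi(\discoveech)}=\Lambda_{\veechsg}$. The inclusion $\Lambda_{\veechsg}\subseteq\Lambda_{\pi(\discoveech)}$ is automatic, so the content is $\Omega_{\veechsg}\subseteq\Omega_{\pi(\discoveech)}$. For this I use that for every $\theta\in\Omega_{\veechsg}$ the dynamics of $\mathcal{F}_\theta$ has been completely described after reduction to the fundamental interval $I$ of Proposition \ref{proposition discontinuity set}: $\mathcal{F}_\theta$ is either dynamically trivial or accumulates on a transverse Cantor set. A direction of this type cannot lie in $\Lambda_{\pi(\discoveech)}$, since a limit-set point is the accumulation of a discrete orbit, and the $\pi(\discoveech)$-invariance of the dynamical type would then force infinitely many directions near $\theta$ to be mutually $\pi(\discoveech)$-conjugate, contradicting the fact that the Rauzy itinerary and the attracting-leaf data separate distinct $\veechsg$-orbits in $I$. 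By Ahlfors' alternative (Theorem \ref{thmahl}) this common limit set, not being all of $S^1$, has zero measure, consistently with $\Omega_{\veechsg}$ having full measure.

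With equal limit sets, both $\veechsg$ and $\pi(\discoveech)$ are finitely generated, hence geometrically finite, Fuchsian groups sharing the convex core $C(\Lambda_{\veechsg})$; the inclusion then induces a finite covering $M_{\veechsg}\to M_{\pi(\discoveech)}$ of degree $d=[\pi(\discoveech):\veechsg]$, and it remains to prove $d=1$. Here I would read off the ends of $M_{\veechsg}$ (a thrice-punctured sphere with two cusps and one funnel, by the Schottky analysis) and check that the covering preserves them. The two cusps, carried by $A$ and $B$, come from genuinely different cylinder decompositions — one horizontal flat cylinder of modulus $6$ versus two vertical flat cylinders of modulus $\tfrac32$ — so no element of $\pi(\discoveech)$ conjugates one to the other and the two cusps remain distinct upstairs; moreover $A$ and $B$ are primitive parabolics, so they are primitive in $\pi(\discoveech)$ as well. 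Finally the single funnel corresponds to the component of $\Omega_{\veechsg}$ stabilised by the primitive hyperbolic $AB^{-1}$ of Proposition \ref{proposition discontinuity set}, which glues the two endpoints of $I$; since the foliations at the two ends of $I$ are of different combinatorial type, no element of $\pi(\discoveech)$ can shorten this translation and the funnel is not further folded. These three facts force $d=1$, whence $\pi(\discoveech)=\veechsg$ and, restoring $-\mathrm{Id}$, the Veech group of $\disco$ is exactly $\veechsg$.

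I expect the main obstacle to be the primitivity/funnel-rigidity step of the last paragraph — in particular excluding a square root of $B$ arising from an affine symmetry that swaps the two equal-modulus vertical cylinders while performing a partial twist (a single cylinder admits no fractional Dehn twist, but the two vertical cylinders are a priori interchangeable). This is exactly the point where one must feed back the fine structure of the directional dynamics on $I$, together with the cylinder count of Proposition \ref{nb cylinders}, rather than only the coarse geometry of $M_{\veechsg}$.
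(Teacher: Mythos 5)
Your proposal follows essentially the same route as the paper's proof: equality of the limit sets of $\pi(\V)$ and $\veechsg$ via the dynamical classification of directions in the fundamental interval, finite index via the covering of convex cores, and exclusion of a proper overgroup by showing that the two cusps cannot be exchanged (they carry essentially different cylinder decompositions), that the parabolics $A$ and $B$ admit no strict roots, and that the boundary geodesic of the convex core cannot be shortened or folded. The only difference is organizational — you package the last step as a covering-degree count on the ends of $M_{\veechsg}$, whereas the paper first proves $\veechsg$ is normal in $\V$ and then analyzes the elements preserving the boundary geodesic — and the obstacle you flag (a possible root of $B$ swapping the two vertical cylinders) is precisely the point the paper handles by arguing that a strict root of $A$ or $B$ would enlarge the limit set.
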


\begin{proof}

  We divide the proof into four steps:

  \begin{enumerate}

  \item proving that any element in $\V$ preserves $\Lambda_{\Gamma}$;
    \label{1}

  \item proving that $\Gamma$ has finite index in $\V$;

  \item proving that the group $\Gamma$ is normal in $\V$;

  \item concluding.
  \end{enumerate}

  \vspace{2mm}

  (1) Let us prove the first point. Because of the description of the
  dynamics of the directional foliations we have achieved, one can
  show the limit set of the Veech group is the same as the limit set
  of $\veechsg$. If not, there must be a point of
  $\Lambda_{\textbf{V}_{\disco}}$ in the fundamental interval $I$. But
  since the group $\textbf{V}_{\disco}$ is non elementary it implies
  that we have to find in $I$ infinitely many copies of a fundamental
  domain for the action of $\Gamma$ on the discontinuity set. In
  particular infinitely many disjoint intervals corresponding to
  directions where the foliation has an attracting leaf of dilatation
  parameter $2$. But by the study performed in the above section the
  only sub-interval of $I$ having this property is $]1,2[$.

  \vspace{2mm}

  (2) The second point follows from the fact that the projection
  $$ \Gamma \backslash \Hy \longrightarrow \V \backslash \Hy $$
  induces an isometric orbifold covering

  $$ C(\Gamma \backslash \Hy) \longrightarrow  C(\V \backslash \Hy ).$$
  Since $C(\Gamma \backslash \Hy)$ has finite volume
  (see Section \ref{subsecfucgam}) and because
  $$[\Gamma: \V] = \frac{\mathrm{vol}(C(\Gamma \backslash
    \Hy))}{\mathrm{vol}(C(\V \backslash \Hy))}$$, this ratio must be
  finite and hence $\Gamma$ has finite index in $\V$.

  \vspace{2mm} (3) Remark that $\Gamma$ is generated by two parabolic
  elements $A$ and $B$ and that these define the only two conjugacy
  class in $\Gamma$ of parabolic elements. We are going to prove that
  any element $g \in \V$ normalise both $A$ and $B$. Since $\Gamma$
  has finite index in $\V$, there exists $n \geq 1$ such that
  $(gAg^{-1})^n \in \Gamma$. There are but two classes of conjugacy of
  parabolic elements in $\Gamma$ which are the ones of $A$ and $B$. If
  $n\geq 2$, this implies that $\V$ contains a strict divisor of $A$,
  which would make the limit set of $\V$ larger that
  $\Lambda_{\Gamma}$ (consider the eigenvalues of the matrix $AB^{-1}$
  which determine points in the boundary on the limit set, see Lemma
  \ref{pro bord cantor}). Therefore $gAg^{-1}$ belongs to $\Gamma$. A
  similar argument shows that $gBg^{-1} \in \Gamma$ and since $A$ and
  $B$ generate $\Gamma$, $g$ normalises $\Gamma$. Hence $\Gamma$ is
  normal in $\V$.  \vspace{2mm}

  (4) Any $g \in \V$ thus acts on the convex core
  $C(\Gamma \backslash \Hy)$ of the surface
  $C(\Gamma \backslash \Hy)$. In particular it has to preserve the
  boundary of $C(\Gamma \backslash \Hy)$, which is a single geodesic
  by Proposition \ref{proposition discontinuity set}. At the universal
  cover it means that $g$ has to fix a lift of the geodesic $c$, thus
  the isometry $g$ permutes two fixed points of a hyperbolic element
  $h$ of $\veechsg$. Two situations can occur:

  \begin{itemize}
  \item $g$ is an elliptic element. His action on
    $\Gamma \backslash \Hy$ cannot permute the two cusps because they
    correspond to two essentially different cylinder decompositions on
    $\Sigma$. It therefore fixes the two cusps and hence must be
    trivial.

  \item $g$ is hyperbolic and fixes the two fixed points of
    $h$. Moreover, by Lemma \ref{pro bord cantor}, it acts on the
    fundamental interval $I$ and as we discussed above such an action
    has to be trivial because of our study of the associated
    directional foliations, the translation length of $g$ is then the
    same than $h$. But $g$ is fully determined by its fixed points and
    its translation length, which shows that $g=h$ and thus
    $g \in \veechsg$.
  \end{itemize}

  \noindent Any element of $\V$ therefore belongs to $\Gamma$ and the
  theorem is proven.
\end{proof}

\begin{remark}
  This theorem implies that the completely periodic directions correspond to
  the orbit by the Veech group of the horizontal and vertical directions, since any parabolic element is
  conjugated to the Dehn twist in one of these two directions. This is the set we
  denoted by $\mathcal H$ in Theorem \ref{thm3}.
\end{remark}

\bibliographystyle{alpha}
\bibliography{biblio}
\end{document}